\pdfoutput=1
\documentclass[12pt]{article}
\usepackage[a4paper,textwidth=160mm,textheight=221mm]{geometry}
\usepackage{amsmath,amsthm}

\usepackage{graphicx}
\usepackage{amsfonts}
\usepackage{amssymb}
\usepackage{color}
\usepackage[colorlinks=true,citecolor=black,linkcolor=black,urlcolor=blue]{hyperref}
\usepackage{xcolor}
\usepackage{bm}
\usepackage{tikz}
\usetikzlibrary{arrows.meta,positioning,calc,math}

\theoremstyle{plain}
\newtheorem{theorem}{Theorem}
\newtheorem{lemma}[theorem]{Lemma}

\newtheorem{proposition}[theorem]{Proposition}
\newtheorem*{restatedthm}{\restatedthmname}
\newcommand{\restatedthmname}{}
\newenvironment{restated}[1]{\renewcommand{\restatedthmname}{Theorem~\ref{#1}}\begin{restatedthm}}{\end{restatedthm}}
\theoremstyle{definition}
\newtheorem{definition}[theorem]{Definition}
\newtheorem{example}[theorem]{Example}
\theoremstyle{remark}

\newcommand{\bN}{\mathbb{N}}
\newcommand{\bQ}{\mathbb{Q}}
\newcommand{\bG}{\mathbf{G}}
\newcommand{\bH}{\mathbf{H}}

\NewDocumentCommand{\vertexs}{O{black} O{1cm} O{above} m O{(0,0)} m O{}}{
\path let \p1 = #6, \p2=#5 in node[style={draw,fill,color=#1,circle,minimum size=6pt,inner sep=0},label={[text=#1, label distance=#2]#3:}] (v#7#4) at (\x1+\x2,\y1+\y2){};}
\NewDocumentCommand{\vertex}{O{black} O{-3pt} O{above} m O{(0,0)} m O{}}{
\path let \p1 = #6, \p2=#5 in node[style={draw,fill,color=#1,circle,minimum size=6pt,inner sep=0},label={[text=#1, label distance=#2]#3:#4}] (v#7#4) at (\x1+\x2,\y1+\y2){};}
\NewDocumentCommand{\vertexl}{O{black} O{0pt} O{0pt} m O{(0,0)} m O{}}{
\path let \p1 = #6, \p2=#5 in node[style={draw,fill,color=#1,circle,minimum size=6pt,inner sep=0},label={[text=#1, shift={(#2, #3)}]#4}] (v#7#4) at (\x1+\x2,\y1+\y2){};}

\title{Proper circular arc graphs are $e$-positive}

\author{Aarush Vailaya\thanks{Department of Mathematics, Massachusetts Institute of Technology, Massachusetts, USA. Email: \texttt{aarushv@mit.edu}.}}
\date{}
\hypersetup{pdftitle={Proper circular arc graphs are e-positive},pdfauthor={Aarush Vailaya}}

\begin{document}

\maketitle

\begin{abstract}
We prove an $e$-positive formula for the chromatic symmetric function of proper circular arc graphs solving the $q=1$ case of Ellzey's conjecture. In doing so, we provide a new proof of the $e$-positivity of unit interval graphs, which alongside Guay-Paquet's reduction gives a new proof of the Stanley--Stembridge conjecture. We define color matrices, which count proper colorings, and tableau matrices, whose entries are nonnegative rational numbers and ratios of elementary symmetric functions. We prove the two matrices are related by a single family of change of basis matrices, which become invertible after restricting to finitely many colors, and we show the chromatic symmetric function of proper circular arc graphs comes from taking the trace of these matrices. Using Hikita's tableaux, this gives an explicit formula for the chromatic symmetric function of a proper circular arc graph as a weighted sum over tableaux whose first and last $k$ vertices lie in the same columns.

\end{abstract}

\section{Introduction}\label{section:introduction}
The chromatic symmetric function $X_G(\bm x)$ generalizes the chromatic
polynomial, encodes additional information about acyclic orientations, and
has connections to Hessenberg varieties \cite{dothessenberg,chromquasihessenberg}
and Hecke algebras \cite{chromhecke,chromcharacters}. The Stanley--Stembridge conjecture
\cite[Conjecture 5.1]{chromsym}, \cite[Conjecture 5.5]{stanstem} states that the chromatic symmetric function of the incomparability graph
of a $(\bm 3+\bm 1)$-free poset has nonnegative coefficients in the
elementary symmetric function basis. By a reduction due to Guay-Paquet
\cite[Theorem 5.1]{stanstemreduction}, it suffices to consider unit interval
graphs. Hikita \cite[Theorem 1.6 and Corollary 1.8]{stanstemproof} resolved
the conjecture by providing a formula in terms of probabilities associated
to standard Young tableaux.

Shareshian and Wachs introduced the chromatic quasisymmetric function
$X_G(\bm x;q)$, showed that it is symmetric for natural unit interval
graphs, and conjectured that its $e$-coefficients are polynomials in $q$
with nonnegative coefficients \cite{chromposquasi}. Ellzey generalized
this function to directed graphs and conjectured the same positivity for
proper circular arc digraphs \cite[Conjecture 1.4]{chromquasidi}.
At $q=1$, this conjectures that every proper circular arc graph is $e$-positive.
Several families built from cycles are known to be $e$-positive
\cite{graphtwin,cyclechains,cyclechordsepositive}, but the general conjecture remains unsolved.

The chromatic symmetric function is multiplicative over connected
components, but gluing graphs at a vertex does not correspond to
multiplication of their chromatic symmetric functions. In
\cite{singlegluing}, Tom and the author gave a matrix formula for this
operation by assigning an infinite $e$-positive matrix $M_G$, constructed from Hikita's tableau, to each natural unit interval graph. Matrix multiplication $M_GM_H=M_{G+H}$ corresponds to gluing two graphs together at a single vertex to form a larger natural unit interval graph, and the trace $\operatorname{tr} M_G=X_{G^\circ}(\bm x)$ corresponds to identifying the first and last vertex of the graph to form a proper circular arc graph.

This paper generalizes the above result to multiple vertices. In particular, given a graph $G$ we define color
matrices, whose entries count proper colorings with specified colors on
the marked vertices, and $e$-positive tableau matrices, which record the columns of
specified boxes in Hikita's tableau. We show these matrices are related by a change of basis and satisfy the same multiplication and trace properties. Since every proper circular arc graph can be obtained by identifying the first and last $k$ vertices of a natural unit interval graph, the tableau
trace then proves that every proper circular arc graph is $e$-positive
(Theorem~\ref{thm:pca}), establishing the $q=1$ case of Ellzey's
conjecture.

Moreover, the tableau matrices give an explicit formula for the chromatic symmetric function of a proper circular arc graph (Theorem~\ref{thm:pca-formula}). We cut the circle to obtain a natural unit interval graph, place the first $k$ vertices at the ends of rows of lengths $j_1,\dots,j_k$, build the rest of the graph with Hikita's rule, and keep the tableaux whose last $k$ vertices return to the columns $j_1,\dots,j_k$. Unlike Hikita's formula, this sums over one starting tableau for each choice of $j$, so the number of terms grows exponentially in $k$, but every term is explicit. In relating the two matrices, we also provide a new proof
of the Stanley--Stembridge conjecture.

More specifically, Section~\ref{section:defs} provides definitions, Section~\ref{section:color} describes color matrices and their multiplication and trace properties, Section~\ref{section:tableau}
defines the tableau operators and states the main theorem, which is proved in Section~\ref{section:proof}. Section~\ref{section:trace} proves
the trace formula for tableau matrices and establishes $e$-positivity. Finally, Section~\ref{section:extra} describes the tableau matrices using Hikita's rule and proves the following explicit formula.

\begin{theorem}\label{thm:pca-formula}
Let $\Gamma$ be a proper circular arc graph on $n$ vertices, obtained from
a natural unit interval graph $G$ on $[n+k]$ by identifying $n+r$ with $r$
for $1\le r\le k$ as in Lemma~\ref{lem:pca}, and let $N\ge n$. Then
\[
X_\Gamma(\bm x)=\sum_j\sum_{\substack{T\supseteq T_j\\
 T\setminus T_j=\{k+1,\dots,n+k\}\\ c_T(n+1,\dots,n+k)=j}}\
 \prod_{v=k+1}^{n+k}
 A\bigl(c_T(v)\mid c_T(N(v))\bigr)\frac{e_{c_T(v)}}{e_{c_T(v)-1}},
\]
where for sequences $\sigma$ of distinct positive integers,
\[
A(1\mid\sigma)=\prod_r\frac{\sigma_r-1}{\sigma_r},\qquad
A(\sigma_r+1\mid\sigma)=\frac{\sigma_r+1}{\sigma_r}\prod_{r'\ne r}\frac{\sigma_r-\sigma_{r'}+1}{\sigma_r-\sigma_{r'}},
\]
and otherwise $A(y\mid\sigma)=0$, $e_0=1$, $N(v)$ is the sequence of
neighbors of $v$ in $G$ smaller than $v$, and $c_T(v)$ is the column of
$v$ in $T$. The sum is over $k$-tuples $j$ of distinct elements of $[N]$
and tableaux $T$ with at most $N$ columns, drawn with rows increasing to
the right and columns increasing upward, where $T_j$ has rows of lengths
$j_1,\dots,j_k$, with $r$ at the end of the row of length $j_r$ and
distinct nonpositive entries elsewhere.
\end{theorem}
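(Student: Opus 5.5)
The plan is to assemble the formula from the trace theorem for tableau matrices (Section~\ref{section:trace}) and the description of those matrices by Hikita's rule (Section~\ref{section:extra}). First I invoke Lemma~\ref{lem:pca} to write $\Gamma$ as the graph obtained from the natural unit interval graph $G$ on $[n+k]$ by identifying $n+r$ with $r$ for $1\le r\le k$. The marked vertices are then $1,\dots,k$ at the start and $n+1,\dots,n+k$ at the end.

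By the trace property of color matrices (Section~\ref{section:color}), $X_\Gamma(\bm x)$ is the trace of the color matrix of $G$. That matrix is indexed by $k$-tuples of colors on the first and last marked vertices, and its diagonal entries count proper colorings in which the two ends receive the same colors. The main theorem of Section~\ref{section:tableau} expresses the color matrix as a conjugate $P\,M^{\mathrm{tab}}_G\,P^{-1}$ of the tableau matrix. Here the change of basis matrix $P$ depends only on $k$ and the number of colors, and is invertible once we restrict to $N$ colors with $N\ge n$. Trace is invariant under conjugation, so after this restriction $X_\Gamma(x_1,\dots,x_N)=\operatorname{tr} M^{\mathrm{tab}}_G$. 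This is the trace formula already proved in Section~\ref{section:trace}; I will cite it rather than reprove it.

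Next, I read off the entries of $M^{\mathrm{tab}}_G$ from Section~\ref{section:extra}. The rows and columns are indexed by $k$-tuples $j$ of distinct elements of $[N]$. The $(j,j')$ entry is a sum over tableaux $T$ obtained from the starting tableau $T_j$ by inserting $k+1,\dots,n+k$ one at a time with Hikita's rule, subject to the condition $c_T(n+1,\dots,n+k)=j'$. Each inserted vertex $v$ contributes the transition probability $A(c_T(v)\mid c_T(N(v)))$ times the ratio $e_{c_T(v)}/e_{c_T(v)-1}$. To justify the neighbor sequence, note that the neighbors of $v$ smaller than $v$ form an interval ending at $v-1$, since $G$ is a natural unit interval graph. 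Their columns in the current tableau are distinct, because consecutive neighbors of $v$ are pairwise adjacent and adjacent vertices must lie in different columns.

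The formula for $A$ follows from Hikita's rule in two cases. The new box is placed either in column $1$, or immediately to the right of the box of some neighbor $\sigma_r$, and the stated rational expressions are exactly Hikita's probabilities. Taking the diagonal entries $j'=j$ and summing over $j$ gives the displayed formula. Entries of $T_j$ that are not the marked vertices are nonpositive placeholders. They only fix the shape and never act as neighbors, because $N(v)$ refers only to vertices of $G$.

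The main obstacle is reconciling indexing conventions. The columns $j_r$ in the index set have to match the convention in which $r$ sits at the end of a row of length $j_r$. The restriction to at most $N$ columns, which is where $N\ge n$ is used, has to be consistent with the finite invertible $P$. Finally, the $e$-ratios must telescope correctly when the starting shape is $T_j$ rather than the empty tableau. For this last point I would check that the normalization in Section~\ref{section:extra} divides out exactly the factor $\prod_r e_{j_r}$ contributed by $T_j$, so that only the factors for $v\ge k+1$ remain.
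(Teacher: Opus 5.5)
Your route is the paper's: combine $X_\Gamma(x_1,\dots,x_N)=\operatorname{tr}M_\bG^{(N)}$ (Theorem~\ref{thm:pca}) with $M_\bG^{(N)}=\mathcal M_\bG^{(N)}$ (Theorem~\ref{thm:tableau}) and read off the diagonal. However, you have put the hypothesis $N\ge n$ in the wrong place, and as a result the final step is missing.

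Invertibility of $U_k^{(N)}$ needs only $k\le N$, and Theorem~\ref{thm:pca} holds for every $N$: if some level of the word exceeds $N$, both sides vanish. So your argument actually proves the identity with $X_\Gamma(x_1,\dots,x_N)$ on the left and $e_i=e_i(x_1,\dots,x_N)$ on the right. The statement is about $X_\Gamma(\bm x)$ in infinitely many variables, and this passage is exactly where the paper uses $N\ge n$. The argument runs as follows.
\begin{itemize}
\item $X_\Gamma(\bm x)$ is homogeneous of degree $n$, hence a polynomial in $e_1,\dots,e_n$ with $n\le N$.
\item The right side is a Laurent polynomial in $e_1,\dots,e_N$.
\item Since $e_1(x_1,\dots,x_N),\dots,e_N(x_1,\dots,x_N)$ are algebraically independent, the finite identity is an identity of Laurent polynomials in formal variables.
\item That identity can then be evaluated at the elementary symmetric functions in infinitely many variables.
\end{itemize}
Add this paragraph explicitly.

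Your worry about the $e$-ratios telescoping from $T_j$ goes away once you use the actual definition. The entry of $\mathcal M_\bG$ in row $c_T(\alpha)$ and column $j$ is $\sum_T w(T\mid T_j,G)\,\varepsilon(j)/\varepsilon(c_T(\alpha))$. Your description of the $(j,j')$ entry omits this $\varepsilon$ factor, but on the diagonal it equals $1$. Moreover, $w(T\mid T_j,G)$ is by definition a product over $T\setminus T_j=\{k+1,\dots,n+k\}$ only, so nothing from $T_j$ ever enters and no telescoping is needed. Also, the factors $A(y\mid\sigma)$ are not Hikita's probabilities; those are $A(y\mid\sigma)/y$. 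Nothing here depends on that, because the formula uses $A$ directly and Proposition~\ref{prop:tableau-word} matches it to $M_k$ and $D_k$.
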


\section{Definitions}\label{section:defs}

Throughout, $N\in\bN\cup\{\infty\}$ is the number of colors, with $[N]=\{1,\ldots,N\}$ for $N\in\bN$ and $[\infty]=\bN$. Let $\bm x=(x_c)_{c\in[N]}$ be commuting variables, one for each color, and $e_j$ be the elementary symmetric functions in $\bm x$. In this section, all statements hold for every $N$.

For basis vectors, we suppress outer parentheses in tuple subscripts: $b_{a,w}$ means $b_{(a,w)}$, and likewise for $\hat b$. The empty tuple is denoted by $\emptyset$.

\begin{definition}
    A \emph{proper coloring} $\kappa: V(G) \to [N]$ assigns a number to each vertex so that adjacent vertices have different colors, meaning $\kappa(v) \neq \kappa(v')$ if $\{v, v'\}\in E(G)$. The \emph{chromatic symmetric function} \cite[Definition 2.1]{chromsym} is
    \[X_G(\bm x)=\sum_{\kappa \text{ proper}}\prod_{v\in V(G)}x_{\kappa(v)},\]
    where $\kappa$ ranges over proper colorings. For finite $N$, $X_G(x_1,\dots,x_N)$ is $X_G(\bm x)$ with $x_c=0$ for $c>N$, and when $N\ge|V(G)|$ this does not change the $e$-coefficients.
\end{definition}

\begin{definition}
A graph $G$ on $[n]$ is a \emph{natural unit interval graph} (NUIG) if $\{i, j\}\in E(G)$ with $i<j$ implies $\{i,k\}, \{k,j\}\in E(G)$ for all $i<k<j$. Equivalently, a graph is isomorphic to a NUIG if and only if it is the intersection graph of finitely many intervals on the real line, none of which contains another \cite{roberts}. A \emph{proper circular arc graph} is then the intersection graph of finitely many arcs of a circle, none of which contains another.

\end{definition}

\begin{definition}
A \emph{marked graph} $\bG=(G,\alpha,\beta)$ is a graph $G$ with two sequences of distinct vertices $\alpha$ (the \emph{outputs}) and $\beta$ (the \emph{inputs}); these are the bi-labeled graphs of \cite[Definition 3.1]{quantumiso}. If $\bH=(H,\alpha',\beta')$ and $\beta,\alpha'$ have the same length, then $\bG+\bH=\big((G\sqcup H)/(\beta_r\sim\alpha'_r),\ \alpha,\ \beta'\big)$ \cite[Definition 3.13]{quantumiso}. If $\alpha,\beta$ have the same length, the \emph{trace} $\operatorname{tr}\bG$ is the graph $G/(\alpha_r\sim\beta_r)$.
\end{definition}

\begin{example}
    Figure~\ref{fig:gluing} shows an example of $(G, (4,5), (1,2))+(H, (3,4), (1,2))$ forming a new NUIG; note that duplicate edges do not change proper colorings so we can ignore them.
\end{example}
\begin{figure}[ht]
\centering
\begin{tikzpicture}[every node/.style={font=\small}]
\colorlet{outG}{red}
\colorlet{glue}{blue}
\colorlet{inH}{green!55!black}
\def\h{.866}
\begin{scope}[shift={(0,0)}]
  \vertex[glue][-3pt][below]{1}[(0,0)]{(2.5,0)}[g]
  \vertex[glue][-3pt][below]{2}[(0,0)]{(1.5,0)}[g]
  \vertex[black][-3pt][above]{3}[(0,0)]{(1,\h)}[g]
  \vertex[outG][-3pt][below]{4}[(0,0)]{(0.5,0)}[g]
  \vertex[outG][-3pt][above]{5}[(0,0)]{(0,\h)}[g]
  \draw (vg1)--(vg2)--(vg3)--(vg4)--(vg5)--(vg3);
  \draw (vg2)--(vg4);
  \node at (1.25,-.95) {$\bG$};
\end{scope}
\node at (3.3,.43) {$+$};
\begin{scope}[shift={(4.1,0)}]
  \vertex[inH][-3pt][below]{1}[(0,0)]{(2,0)}[h]
  \vertex[inH][-3pt][above]{2}[(0,0)]{(1.5,\h)}[h]
  \vertex[glue][-3pt][below]{3}[(0,0)]{(1,0)}[h]
  \vertex[glue][-3pt][below]{4}[(0,0)]{(0,0)}[h]
  \draw (vh1)--(vh2)--(vh3)--(vh1);
  \draw (vh3)--(vh4);
  \node at (1,-.95) {$\bH$};
\end{scope}
\node at (6.85,.43) {$=$};
\begin{scope}[shift={(7.6,0)}]
  \vertex[inH][-3pt][below]{1}[(0,0)]{(3.5,0)}[s]
  \vertex[inH][-3pt][above]{2}[(0,0)]{(3,\h)}[s]
  \vertex[glue][-3pt][below]{3}[(0,0)]{(2.5,0)}[s]
  \vertex[glue][-3pt][below]{4}[(0,0)]{(1.5,0)}[s]
  \vertex[black][-3pt][above]{5}[(0,0)]{(1,\h)}[s]
  \vertex[outG][-3pt][below]{6}[(0,0)]{(0.5,0)}[s]
  \vertex[outG][-3pt][above]{7}[(0,0)]{(0,\h)}[s]
  \draw (vs1)--(vs2)--(vs3)--(vs1);
  \draw (vs3) to[bend left=14] (vs4);
  \draw (vs3) to[bend right=14] (vs4);
  \draw (vs5)--(vs4)--(vs6)--(vs5)--(vs7)--(vs6);
  \node at (1.75,-.95) {$\bG+\bH$};
\end{scope}
\end{tikzpicture}
\caption{The sum $(G,(4,5),(1,2))+(H,(3,4),(1,2))$.}
\label{fig:gluing}
\end{figure}
\begin{lemma}\label{lem:pca}
Every proper circular arc graph is $\operatorname{tr}\bG$ for some $\bG=(G,(n+1,\dots,n+k),(1,\dots,k))$, where $G$ is a NUIG on $[n+k]$ whose first $k$ and last $k$ vertices form cliques.
\end{lemma}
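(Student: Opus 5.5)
We have to prove Lemma~\ref{lem:pca}: every proper circular arc graph $\Gamma$ is $\operatorname{tr}\bG$ for $\bG=(G,(n+1,\dots,n+k),(1,\dots,k))$, with $G$ a NUIG on $[n+k]$ whose first $k$ and last $k$ vertices are cliques. The plan is to start from a circular model of $\Gamma$, cut the circle at a point, and unroll one full turn plus a little more, so that the arcs crossing the cut appear twice: once at the start and once at the end.

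\textbf{Normalizing the model.} Let $\Gamma$ have $n$ vertices, represented by arcs on the unit circle, none containing another. By small perturbations we make all $2n$ endpoints distinct. This keeps the intersection pattern and keeps properness. If two arcs together cover the whole circle, the perturbation has to be done with care; one way is to pass to a standard normalized model in which no two arcs cover the circle. The case where $\Gamma$ is itself a unit interval graph is easy: take $k=0$, or handle it by the same construction with an empty cut.

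\textbf{Ordering and unrolling.} Choose a cut point $p$ on the circle that is not an endpoint. Parametrize the circle as $[0,1)$ starting at $p$ and going counterclockwise, and write each arc as $[s_i,t_i]$ with start $s_i\in[0,1)$. Because no arc contains another, the starts and ends occur in the same cyclic order. Order the vertices $1,\dots,n$ by increasing $s_i$. Let $k$ be the number of arcs containing the point $1\equiv 0$ just before the cut wraps. Relabeling cyclically if needed, these should be the arcs whose starts come last, so it is more convenient to choose $p$ such that the arcs through $p$ are exactly the first $k$ in start order.

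Now lift to the real line. Arc $i$ becomes an interval $I_i$. The $k$ arcs through $p$ get intervals starting slightly before $0$ and extending past it, and they also get a second copy $I_{n+r}=I_r+1$. Each other arc becomes one interval inside $(0,1+\epsilon)$. Intervals that wrap past $1$ and do not contain $p$ are impossible by the choice of cut, apart from arcs through $p$.

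\textbf{Checking the NUIG structure.} These $n+k$ intervals are proper, since lifting preserves non-containment. Ordering them by left endpoint gives $r$, then the interior arcs, then $n+r$, which matches the required labels. By the stated Roberts characterization, ordering proper intervals by left endpoint yields a NUIG. The first $k$ intervals all contain $0$, so they form a clique, and the last $k$ all contain $1$, so they form a clique.

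Finally we verify that identifying $n+r$ with $r$ recovers exactly $E(\Gamma)$. Two arcs meet on the circle exactly when some pair of their lifts meet on the line. We need to check that every intersection is witnessed within the window, and that no spurious edge appears. A spurious edge could come from $I_r$ meeting $I_{n+r'}$ in a way that does not correspond to an actual intersection; the only worry is a length-greater-than-one overlap, which is excluded because arcs are shorter than the full circle.

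\textbf{Main obstacle.} The hard part is this last edge-correspondence check. Intersections that happen "around the back" must appear in $G$ between some interior vertex and some $n+r$, or between $r$ and some interior vertex. If two arcs through $p$ meet both near $p$ and on the far side, they produce a duplicate edge after the trace. That is harmless, since duplicate edges do not change proper colorings.
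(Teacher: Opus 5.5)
Your proposal is correct and is essentially the paper's argument. You cut at a non-endpoint $p$, lift the $k$ arcs through $p$ twice as full-length intervals at the two ends (the paper's ``extending the terminal intervals''), and read off a NUIG whose first and last $k$ vertices are cliques and whose trace is $\Gamma$.

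One step you assert without proof is that the second lifts $n+1,\dots,n+k$ come last in left-endpoint order. This follows from properness: an arc avoiding $p$ that started after some $s_r$ would lie inside arc $r$. Also, the spurious-edge check needs no normalized model. Any intersection of two lifts projects to an intersection of the corresponding arcs, and a lift never meets its own translate because arcs are shorter than the circle.
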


\begin{proof}
Using the arc representation of the proper circular arc graph, pick some point $p$ on the circle that is not an endpoint of an arc, and intersects $k$ intervals which we label $1, \ldots, k$ counterclockwise by their starting points. Then, cutting the circle at $p$ and unraveling the arcs onto the number line produces exactly the NUIG we need (after extending the terminal intervals to ensure no interval contains another), shown in Figure~\ref{fig:unroll}.
\end{proof}

\begin{figure}[ht]
\centering

\scalebox{0.9}{
\begin{tikzpicture}[every node/.style={font=\small}]
\tikzmath{\dytop=0; \dybot=-3.9; \u=1.1; \C=3.35; \D=360/\C;}
\begin{scope}[shift={(1.5,\dytop)}]
  \foreach \a/\r/\c in {2.75/1.30/red, 2.95/1.12/blue, .15/.94/gray!70,
      .9/1.30/gray!70, 1.45/1.12/gray!70, 2.2/.94/gray!70}
    \draw[\c,line width=2.6pt] ({90+\D*\a}:\r) arc[start angle={90+\D*\a},delta angle=\D,radius=\r];
  \draw[densely dashed,thick] (0,0) -- (90:1.5);
\end{scope}
\draw[->,thick] (3.3,\dytop) -- (4.3,\dytop);
\node at (3.8,\dytop+.32) {cut};
\begin{scope}[shift={(4.7+.6*\u,\dytop)},x=\u cm]
  \foreach \b/\y/\c in {-.6/.3/red, .9/.3/gray!70!black, 2.75/.3/red,
      -.4/0/blue, 1.45/0/gray!70!black, 2.95/0/blue,
      .15/-.3/gray!70!black, 2.2/-.3/gray!70!black}
    \draw[\c,line width=2.3pt] ({3.35-\b},\y) -- ({2.35-\b},\y);
\end{scope}
\begin{scope}[shift={(1.5,\dybot)}]
  \vertex[red][-3pt][above]{1}[(0,0)]{(60:1.1)}
  \vertex[blue][-3pt][above]{2}[(0,0)]{(120:1.1)}
  \vertex[black][-3pt][left]{3}[(0,0)]{(180:1.1)}
  \vertex[black][-3pt][below]{4}[(0,0)]{(-120:1.1)}
  \vertex[black][-3pt][below]{5}[(0,0)]{(-60:1.1)}
  \vertex[black][-3pt][right]{6}[(0,0)]{(0:1.1)}
  \draw (v1)--(v2)--(v3)--(v4)--(v5)--(v6)--(v1)--(v3);
  \draw (v6)--(v2);
\end{scope}
\draw[->,thick] (3.3,\dybot) -- (4.3,\dybot);
\node at (3.8,\dybot+.32) {unravel};
\begin{scope}[shift={(5.0,\dybot)}]
  \tikzmath{\h=.866;}
  \vertex[red][-3pt][above]{1}[(0,0)]{(2*\h+3,.5)}
  \vertex[blue][-3pt][below]{2}[(0,0)]{(2*\h+3,-.5)}
  \vertex[black][-3pt][above]{3}[(0,0)]{(\h+3,0)}
  \vertex[black][-3pt][above]{4}[(0,0)]{(\h+2,0)}
  \vertex[black][-3pt][above]{5}[(0,0)]{(\h+1,0)}
  \vertex[black][-3pt][above]{6}[(0,0)]{(\h,0)}
  \vertex[red][-3pt][above]{7}[(0,0)]{(0,.5)}
  \vertex[blue][-3pt][below]{8}[(0,0)]{(0,-.5)}
  \draw (v1)--(v2)--(v3)--(v1);
  \draw (v3)--(v4)--(v5)--(v6);
  \draw (v6)--(v7)--(v8)--(v6);
\end{scope}
\end{tikzpicture}
}
\caption{A proper circular arc graph being unraveled into a NUIG.}
\label{fig:unroll}
\end{figure}

\section{Color matrices}\label{section:color}

\begin{definition}
Let $I_k=\{(i_1,\ldots,i_k)\in[N]^k:\ i_1,\ldots,i_k\text{ distinct}\}$. For finite $N$, the spaces of \emph{color vectors} $\hat W_k$ and $\hat V_k$ are spanned by the $\hat b_i$ with $i\in[N]^k$ and $i\in I_k$ respectively. For $N=\infty$, $\hat V_k$ consists of the infinite sums $\sum_{i\in I_k}F(x_{i_1},\dots,x_{i_k})\,\hat b_i$, where $F\in x_1\cdots x_k\,\bQ(e_1,e_2,\dots)[x_1,\dots,x_k]$. When $N=\infty$, the trace is the formal sum of the diagonal coefficients; our trace arguments use only finite $N$.
\end{definition}

\begin{definition}
The \emph{color matrix} of a marked graph $\bG=(G,\alpha,\beta)$ is $\hat M_\bG\colon\hat W_{\ell(\beta)}\to\hat W_{\ell(\alpha)}$,
\[
  \hat M_\bG \;=\; \sum_{\kappa\text{ proper}}
  \frac{\prod_{v\in V(G)} x_{\kappa(v)}}{\prod_{r} x_{\kappa(\beta_r)}}\,
  \hat b_{\kappa(\alpha)}\, \hat b_{\kappa(\beta)}^{T},
\]
where $\kappa(\alpha)=(\kappa(\alpha_1),\ldots,\kappa(\alpha_\ell))$. If $\alpha$ and $\beta$ are cliques, $\hat M_\bG$ restricts to a map $\hat V_{\ell(\beta)}\to\hat V_{\ell(\alpha)}$, which extends to $N=\infty$.
\end{definition}

\begin{example}
    Figure~\ref{fig:p3} shows the colors corresponding to $\hat M_{(P_3,(3),(1))}$ for $N=3$.
\end{example}

\begin{figure}[ht]
\centering
\begin{tikzpicture}[every node/.style={font=\small},
  col/.style={circle,draw,minimum size=11pt,inner sep=0pt,font=\scriptsize},
  c1/.style={col,fill=yellow!55},
  c2/.style={col,fill=cyan!35},
  c3/.style={col,fill=magenta!30}]
\begin{scope}[shift={(-4.75,-2.4)}]
  \vertex[blue][-3pt][below]{1}[(0,0)]{(1.6,0)}[p]
  \vertex[black][-3pt][below]{2}[(0,0)]{(.8,0)}[p]
  \vertex[red][-3pt][below]{3}[(0,0)]{(0,0)}[p]
  \draw (vp1)--(vp2)--(vp3);
  \node at (.8,-.75) {$(P_3,(3),(1))$};
\end{scope}
\def\cw{2.5} \def\ch{1.6}
\foreach \d in {1,2,3}
  \node at ({(\d-.5)*\cw},.35) {$\kappa(1)=\d$};
\foreach \c in {1,2,3}
  \node[anchor=east] at (-.15,{-(\c-.5)*\ch}) {$\kappa(3)=\c$};
\draw[gray!60] (0,0) grid[xstep=\cw,ystep=\ch] (3*\cw,-3*\ch);
\newcommand{\pcol}[5]{
  \node[c#1] (a) at ({#4+.55},{#5}) {#1};
  \node[c#2] (b) at ({#4},{#5}) {#2};
  \node[c#3] (c) at ({#4-.55},{#5}) {#3};
  \draw (a)--(b)--(c);}
\foreach \c/\d/\e in {1/2/3,1/3/2,2/1/3,2/3/1,3/1/2,3/2/1}{
  \pcol{\d}{\e}{\c}{(\d-.5)*\cw}{-(\c-.5)*\ch+.28}
  \node at ({(\d-.5)*\cw},{-(\c-.5)*\ch-.38}) {$x_{\c}x_{\e}$};}
\foreach \c/\e/\f in {1/2/3,2/1/3,3/1/2}{
  \pcol{\c}{\e}{\c}{(\c-.5)*\cw}{-(\c-.5)*\ch+.47}
  \pcol{\c}{\f}{\c}{(\c-.5)*\cw}{-(\c-.5)*\ch+.05}
  \node at ({(\c-.5)*\cw},{-(\c-.5)*\ch-.47}) {$x_{\c}(x_{\e}+x_{\f})$};}
\end{tikzpicture}
\caption{The color matrix of $(P_3,(3),(1))$ with $N=3$.}
\label{fig:p3}
\end{figure}

\begin{lemma}\label{lem:color}
$\hat M_{\bG+\bH}=\hat M_\bG \hat M_\bH$. If $\alpha$ and $\beta$ are disjoint cliques of the same length, then $\operatorname{tr}\hat M_\bG=X_{\operatorname{tr}\bG}(\bm x)$, the trace taken on $\hat V_{\ell(\beta)}$.
\end{lemma}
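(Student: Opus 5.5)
The plan is to verify both claims by expanding definitions and matching proper colorings bijectively. For the product, fix $\bG=(G,\alpha,\beta)$ and $\bH=(H,\alpha',\beta')$ with $\ell(\beta)=\ell(\alpha')=m$. The entry of $\hat M_\bG\hat M_\bH$ in position $(a,b)$ is
\[
\sum_{i\in[N]^m}\ \sum_{\substack{\kappa\text{ proper on }G\\ \kappa(\alpha)=a,\ \kappa(\beta)=i}}\ \sum_{\substack{\lambda\text{ proper on }H\\ \lambda(\alpha')=i,\ \lambda(\beta')=b}} \frac{\prod_{v\in V(G)}x_{\kappa(v)}}{\prod_r x_{i_r}}\cdot\frac{\prod_{w\in V(H)}x_{\lambda(w)}}{\prod_r x_{b_r}}.
\]
A pair $(\kappa,\lambda)$ that agrees on the glued vertices ($\kappa(\beta_r)=\lambda(\alpha'_r)=i_r$) is the same thing as a coloring $\mu$ of $(G\sqcup H)/(\beta_r\sim\alpha'_r)$. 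This $\mu$ is proper exactly when $\kappa$ and $\lambda$ are both proper, because every edge of the quotient comes from an edge of $G$ or of $H$. Duplicate edges are harmless, and loops cannot arise since $\beta$ and $\alpha'$ consist of distinct vertices. For the weights, the glued vertices are counted once in $G$ and once in $H$, and the factor $\prod_r x_{i_r}$ removes the double count. So the weight equals $\prod_{u\in V(G+H)}x_{\mu(u)}/\prod_r x_{\mu(\beta'_r)}$, which is the $(a,b)$ entry of $\hat M_{\bG+\bH}$. The one subtlety is if $\beta$ (or $\alpha'$) contains some vertex twice, but both are sequences of distinct vertices, so the identification is a well-defined quotient. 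When $\alpha,\beta$ are cliques, the same identity restricts to the $\hat V$ spaces.

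For the trace, assume $N$ is finite, as the paper stipulates. Since $\alpha$ and $\beta$ are cliques, each proper $\kappa$ has $\kappa(\alpha),\kappa(\beta)\in I_k$, so the matrix really is supported on $\hat V_k$ and its trace there is
\[
\sum_{\substack{\kappa\text{ proper}\\ \kappa(\alpha)=\kappa(\beta)}}\frac{\prod_{v}x_{\kappa(v)}}{\prod_r x_{\kappa(\beta_r)}}.
\]
Next I would construct a bijection with proper colorings of $\operatorname{tr}\bG=G/(\alpha_r\sim\beta_r)$. Because $\alpha$ and $\beta$ are disjoint, the identification merges the $2k$ distinct vertices into exactly $k$ vertices. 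A coloring of the quotient pulls back to a coloring $\kappa$ of $G$ with $\kappa(\alpha_r)=\kappa(\beta_r)$, and conversely any such $\kappa$ descends to the quotient. Properness is preserved in both directions because the quotient's edges are the images of $G$'s edges.

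The main obstacle is an edge $\{\alpha_r,\beta_r\}$ in $G$, which would become a loop in the quotient. The condition $\kappa(\alpha_r)=\kappa(\beta_r)$ already makes such $\kappa$ improper, so those terms vanish on both sides, provided loops are read as forbidding every coloring. I would note this convention explicitly; in the NUIG application such edges do not arise anyway. Finally, the weight $\prod_v x_{\kappa(v)}$ counts each merged vertex twice, once through $\alpha_r$ and once through $\beta_r$, and dividing by $\prod_r x_{\kappa(\beta_r)}$ leaves exactly $\prod_{u\in V(\operatorname{tr}\bG)}x_{\mu(u)}$. Summing over all proper colorings then gives $X_{\operatorname{tr}\bG}(\bm x)$.
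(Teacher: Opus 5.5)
Your proof is correct and follows the paper's argument. Both expand the matrix entries, identify proper colorings of $\bG+\bH$ with pairs of proper colorings that agree on the glued vertices, and identify proper colorings of $\operatorname{tr}\bG$ with proper colorings of $G$ satisfying $\kappa(\alpha)=\kappa(\beta)$; the division by $\prod_r x_{\kappa(\beta_r)}$ then cancels the double-counted glued vertices. Your explicit handling of a possible edge $\{\alpha_r,\beta_r\}$, which becomes a loop in the quotient, is a small point the paper leaves implicit.
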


\begin{proof}
This is \cite[Lemma 3.21]{quantumiso} with weights: since a proper coloring of $\bG+\bH$ is a pair of proper colorings of $G$ and $H$ agreeing on the glued vertices,
\begin{gather*}
(\hat M_{\bG+\bH})_{i,j}=\sum_{\substack{\kappa(\alpha)=i\\\kappa(\beta')=j}}\ \prod_{v\notin\beta'}x_{\kappa(v)}
=\sum_l\Big(\sum_{\substack{\kappa(\alpha)=i\\\kappa(\beta)=l}}\ \prod_{v\notin\beta}x_{\kappa(v)}\Big)\Big(\sum_{\substack{\kappa'(\alpha')=l\\\kappa'(\beta')=j}}\ \prod_{v\notin\beta'}x_{\kappa'(v)}\Big)=(\hat M_\bG \hat M_\bH)_{i,j},\\
\operatorname{tr}\hat M_\bG=\sum_i\sum_{\kappa(\alpha)=\kappa(\beta)=i}\ \prod_{v\notin\beta}x_{\kappa(v)}=X_{\operatorname{tr}\bG}(\bm x).
\end{gather*}
\end{proof}

\begin{definition}
Let $K_m$ be the complete graph on $[m]$, and let
\[
\hat M_k=\hat M_{(K_{k+1},\,(1,\dots,k+1),\,(1,\dots,k))},\qquad \hat D_k=\hat M_{(K_k,\,(2,\dots,k),\,(1,\dots,k))},
\]
so that $\hat M_k\hat b_i=\sum_{c\notin i}x_c\,\hat b_{i,c}$ and $\hat D_k\hat b_i=\hat b_{i_2,\dots,i_k}$.
\end{definition}
These two matrices build every graph of Lemma~\ref{lem:pca}. More generally, if $G$ is a NUIG on $[n]$ whose first $a$ and last $b$ vertices form cliques, we call $\bG=(G,\alpha,\beta)$ with $\alpha=(n-b+1,\dots,n)$ and $\beta=(1,\dots,a)$ a \emph{marked NUIG}.

\begin{lemma}\label{lem:word}
If $\bG$ is a marked NUIG, then $\hat M_\bG=\hat X_1\cdots\hat X_m$, where each $\hat X_s$ is some $\hat M_j$ or $\hat D_j$. Hence, if $\bG$ is as in Lemma~\ref{lem:pca}, Lemma~\ref{lem:color} gives
\begin{equation}\label{eq:pcatrace}
X_{\operatorname{tr}\bG}(\bm x)=\operatorname{tr}(\hat{M}_{\bG}).
\end{equation}
\end{lemma}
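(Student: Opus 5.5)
The plan is to build the marked NUIG one vertex at a time and read off the factorization from the multiplicativity in Lemma~\ref{lem:color}. Let $G$ be a NUIG on $[n]$ with $\beta=(1,\dots,a)$ and $\alpha=(n-b+1,\dots,n)$, where both are cliques. For each vertex $v$, let $\ell(v)$ be the smallest neighbor of $v$ that is at most $v$ (so $\ell(v)=v$ if there is none). By the NUIG property, the earlier neighbors of $v$ are exactly the interval $\{\ell(v),\dots,v-1\}$. Moreover $\ell$ is weakly increasing: if $u<v$ and $\ell(v)\le w<v$, then $\{w,v\}\in E$ forces $\{w,u\}\in E$ whenever $w<u$.

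I will proceed by induction, maintaining a \emph{window} $[p,q]$ with the invariant that $\hat M$ of the marked graph $(G[1..q],(p,\dots,q),\beta)$ is a product of the $\hat M_j$ and $\hat D_j$. The window $[p,q]$ is a clique, since it will always sit inside a clique of the form $\{\ell(v),\dots,v\}$.

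For the base case, take the window $[1,a]$: the graph on $[a]$ is the clique $\beta$. The relevant matrix is $\hat M_{(K_a,(1..a),(1..a))}$, which is the identity on $\hat V_a$ (the weights cancel). We can realize it as the empty product, or as $\hat D_{a+1}\hat M_a$ if a nonempty word is required.

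For the step, add the vertex $v=q+1$. First apply $\hat D$ repeatedly to drop the oldest outputs until the window becomes $[\ell(v),q]$. This is possible because $\ell(v)\ge \ell(q)\ge p$ by monotonicity. When $\ell(v)=v$ we drop everything; this is handled by the convention for $\hat D_1$, which maps to the empty tuple, followed by $\hat M_0$. Next apply $\hat M_{q-\ell(v)+1}$. This is exactly the gluing $\bG'+(K_{m+1},(1..m+1),(1..m))$ with $m=q-\ell(v)+1$, which attaches a new vertex adjacent precisely to the window $\{\ell(v),\dots,q\}$. This is correct because those are exactly $v$'s earlier neighbors, and the window is a clique.

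One check is needed here. Dropping a vertex $u$ from the window through $\hat D$ corresponds to gluing with $(K_m,(2..m),(1..m))$. This only uses edges already present among window vertices, since duplicate edges are harmless, and adds no new vertices. So it changes only which vertices are outputs, never the coloring count. In symbols, $\hat M_{\bG'+\bH}=\hat M_{\bG'}\hat M_{\bH}$ by Lemma~\ref{lem:color}, and the graph of $\bG'+\bH$ is still $G[1..q]$.

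Once $q=n$, apply $\hat D$'s to shrink the window to $[n-b+1,n]$. This is possible because $\alpha$ is a clique of the last $b$ vertices, so $b\le n-p+1$ whenever the window is large enough. If the window is too small, we cannot enlarge it by $\hat D$. To fix this, I would instead process the vertices in order while never dropping any vertex that lies in $[n-b+1,n]$ or is still needed. In other words, drop only down to $\min(\ell(v),n-b+1)$. Keeping extra window vertices is harmless because $\hat M_m$ makes $v$ adjacent to the whole window. So I must ensure the kept vertices really are adjacent to $v$: if $n-b+1\le u<v$, then $u$ and $v$ both lie in the clique $\alpha$. The same issue arises at the start for $\beta$, but $\beta$ consists of inputs and needs no special care.

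This bookkeeping of the window, namely that every kept vertex is truly adjacent to the new vertex, is the main obstacle. The rest is just multiplicativity.

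Finally, for $\bG$ as in Lemma~\ref{lem:pca}, $\alpha=(n+1,\dots,n+k)$ and $\beta=(1,\dots,k)$ are disjoint cliques of equal length. Lemma~\ref{lem:color} then gives $\operatorname{tr}\hat M_\bG=X_{\operatorname{tr}\bG}(\bm x)$, which is \eqref{eq:pcatrace}.
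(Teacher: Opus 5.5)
Your proposal is correct and is essentially the paper's proof: build $G$ one vertex at a time from the marked inputs, forget marked vertices older than $\ell(v)$ with $\hat D$, attach $v$ with $\hat M_r$, finish with $\hat D$'s, and multiply using Lemma~\ref{lem:color}. Your monotonicity of $\ell$ also makes explicit why the earlier neighbors are always the most recently marked vertices, which the paper leaves implicit.

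Some slips to fix:
\begin{itemize}
\item In the paper's convention the attaching step is $(K_{m+1},(1,\dots,m+1),(1,\dots,m))+\bG'$, not $\bG'+(K_{m+1},\dots)$, so each new factor goes on the left. The same reversal applies to your $\hat D$ step.
\item $\hat D_{a+1}\hat M_a$ is not the identity: it sends $\hat b_i$ to $\sum_{c\notin i}x_c\hat b_{i_2,\dots,i_a,c}$. Just use the empty word.
\item Your worry about the final window is unfounded. Since $\alpha$ is a clique, $\ell(v)\le n-b+1$ whenever $v>n-b+1$. So your $\min(\ell(v),n-b+1)$ always equals $\ell(v)$, and the window $[\ell(n),n]$ after attaching $n$ already contains $\alpha$.
\item Keeping non-neighbors in the window would not be harmless, since $\hat M_m$ would add edges absent from $G$.
\end{itemize}
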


\begin{proof}
We build $G$ one vertex at a time, starting with the inputs $1,\dots,a$ marked. By the NUIG condition, the earlier neighbors of each new vertex are the $r$ most recently marked vertices for some $r$, so we apply $\hat D$ to forget the older marked vertices and then $\hat M_r$ to attach the new vertex. Finally, we apply $\hat D$ until only the outputs $n-b+1,\dots,n$ remain. By Lemma~\ref{lem:color}, $\hat M_\bG$ is the product of these steps, with later steps on the left.
\end{proof}

\begin{example}\label{ex:word}
In Figure~\ref{fig:word}, the inputs $1,2$ are blue and the marked clique is red. The rightmost graph is built by $\hat M_3\hat M_2\hat D_3\hat M_2\hat D_3\hat M_2$. To add vertex $7$, whose earlier neighbors are $4,5,6$, we first forget vertex $3$ with $\hat D_4$ and then attach $7$ with $\hat M_3$, so that $4,\dots,7$ are marked.
\end{example}

\begin{figure}[ht]
\centering
\begin{tikzpicture}[every node/.style={font=\small},
  hull/.style={line width=15pt,line join=round,line cap=round}]
\def\h{.866}
\colorlet{inshade}{blue!15}
\colorlet{retshade}{red!15}
\begin{scope}[shift={(9.3,0)}]
  \draw[hull,inshade,fill=inshade] (2.5,0)--(2,\h)--cycle;
  \draw[hull,retshade,fill=retshade] (1.5,0)--(0.5,0)--(0,\h)--(1,\h)--cycle;
  \vertex[blue][-3pt][below]{1}[(0,0)]{(2.5,0)}[a]
  \vertex[blue][-3pt][above]{2}[(0,0)]{(2,\h)}[a]
  \vertex[red][-3pt][below]{3}[(0,0)]{(1.5,0)}[a]
  \vertex[red][-3pt][above]{4}[(0,0)]{(1,\h)}[a]
  \vertex[red][-3pt][below]{5}[(0,0)]{(0.5,0)}[a]
  \vertex[red][-3pt][above]{6}[(0,0)]{(0,\h)}[a]
  \draw (va1)--(va2)--(va3)--(va4)--(va5)--(va6);
  \draw (va1)--(va3)--(va5) (va2)--(va4)--(va6) (va3)--(va6);
\end{scope}
\draw[->,thick] (8.8,.43) -- node[above] {$\hat D_4$} (7.9,.43);
\begin{scope}[shift={(4.9,0)}]
  \draw[hull,inshade,fill=inshade] (2.5,0)--(2,\h)--cycle;
  \draw[hull,retshade,fill=retshade] (0.5,0)--(0,\h)--(1,\h)--cycle;
  \vertex[blue][-3pt][below]{1}[(0,0)]{(2.5,0)}[b]
  \vertex[blue][-3pt][above]{2}[(0,0)]{(2,\h)}[b]
  \vertex[black][-3pt][below]{3}[(0,0)]{(1.5,0)}[b]
  \vertex[red][-3pt][above]{4}[(0,0)]{(1,\h)}[b]
  \vertex[red][-3pt][below]{5}[(0,0)]{(0.5,0)}[b]
  \vertex[red][-3pt][above]{6}[(0,0)]{(0,\h)}[b]
  \draw (vb1)--(vb2)--(vb3)--(vb4)--(vb5)--(vb6);
  \draw (vb1)--(vb3)--(vb5) (vb2)--(vb4)--(vb6) (vb3)--(vb6);
\end{scope}
\draw[->,thick] (4.4,.43) -- node[above] {$\hat M_3$} (3.5,.43);
\begin{scope}[shift={(0,0)}]
  \draw[hull,inshade,fill=inshade] (3,0)--(2.5,\h)--cycle;
  \draw[hull,retshade,fill=retshade] (1,0)--(0,0)--(0.5,\h)--(1.5,\h)--cycle;
  \vertex[blue][-3pt][below]{1}[(0,0)]{(3,0)}[c]
  \vertex[blue][-3pt][above]{2}[(0,0)]{(2.5,\h)}[c]
  \vertex[black][-3pt][below]{3}[(0,0)]{(2,0)}[c]
  \vertex[red][-3pt][above]{4}[(0,0)]{(1.5,\h)}[c]
  \vertex[red][-3pt][below]{5}[(0,0)]{(1,0)}[c]
  \vertex[red][-3pt][above]{6}[(0,0)]{(0.5,\h)}[c]
  \vertex[red][-3pt][below]{7}[(0,0)]{(0,0)}[c]
  \draw (vc1)--(vc2)--(vc3)--(vc4)--(vc5)--(vc6)--(vc7);
  \draw (vc1)--(vc3)--(vc5)--(vc7) (vc2)--(vc4)--(vc6) (vc3)--(vc6) (vc4)--(vc7);
\end{scope}
\end{tikzpicture}
\caption{Adding vertex $7$ with $\hat M_3\hat D_4$.}
\label{fig:word}
\end{figure}

\section{Tableau operators}\label{section:tableau}

Let $V_k$ be the $\bQ(e_1,e_2,\dots)$-vector space with basis $\{b_\sigma:\sigma\in I_k\}$. In Sections~\ref{section:tableau} and~\ref{section:proof}, $N=\infty$, so $I_k$ is the set of $k$-tuples of distinct positive integers.

\begin{definition}
For $\sigma\in I_k$, let $T(\sigma)=\{y\notin\sigma:\ y=1\text{ or }y-1\in\sigma\}$, and for $y\in T(\sigma)$ let
\[
A(1\mid\sigma)=\prod_r\frac{\sigma_r-1}{\sigma_r},\qquad A(\sigma_r+1\mid\sigma)=\frac{\sigma_r+1}{\sigma_r}\prod_{r'\ne r}\frac{\sigma_r-\sigma_{r'}+1}{\sigma_r-\sigma_{r'}},
\]
and $A(y\mid\sigma)=0$ otherwise. The \emph{tableau matrices} $M_k\colon V_k\to V_{k+1}$ and $D_k\colon V_k\to V_{k-1}$ are
\[
M_kb_\sigma=\sum_yA(y\mid\sigma)\,b_{\sigma,y},\qquad D_kb_\sigma=\frac{e_{\sigma_1}}{e_{\sigma_1-1}}\,b_{\sigma_2,\dots,\sigma_k}.
\]
\end{definition}

The entries of $M_k$ are nonnegative rationals, and those of $D_k$ are ratios $e_a/e_{a-1}$. If $a\notin\sigma$, then only the factor coming from $a$ is new, so for every $y\ne a+1$,
\begin{equation}\label{eq:ratio}
A(y\mid a,\sigma)=\frac{y-a}{y-a-1}\,A(y\mid\sigma).
\end{equation}

\begin{theorem}\label{thm:main}
There are linear maps $U_k\colon V_k\to\hat V_k$, $k\ge0$, with $U_0=1$, such that
\begin{align}
U_kM_{k-1}&=\hat M_{k-1}U_{k-1},\tag{M}\label{eq:intertwine-M}\\
\hat D_kU_k&=U_{k-1}D_k.\tag{D}\label{eq:intertwine-D}
\end{align}
\end{theorem}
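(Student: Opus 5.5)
The plan is to construct the $U_k$ explicitly. It is easiest to work through generating functions in an auxiliary variable, so that the weights $A(y\mid\sigma)$ appear as residues of a rational function. First note the constraints that $U_0=1$ and \eqref{eq:intertwine-M} impose in low degree. Since $M_0b_\emptyset=A(1\mid\emptyset)b_1=b_1$ and $\hat M_0\hat b_\emptyset=\sum_c x_c\hat b_c$, we are forced to take $U_1b_1=\sum_c x_c\hat b_c$. Alternating $M_1$ with $D_2$ (via $D_2M_1b_1=2\tfrac{e_1}{e_0}b_2$) then forces $U_1b_2$, and so on. This suggests the ansatz
\[
U_kb_\sigma=\sum_{i\in I_k}x_{i_1}\cdots x_{i_k}\,F_\sigma(i)\,\hat b_i .
\]
Here $F_\sigma(i)$ is a normalized count, weighted by $\bm x$, of ways to fill the $\sigma_r-1$ cells to the left of the marked box in row $r$ with colors avoiding $i$. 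Concretely, I would take $F_\sigma(i)$ to be a suitable ratio of elementary symmetric functions in the variables other than $x_{i_1},\dots,x_{i_k}$, normalized by products of $e_{\sigma_r-1}$'s and factorial-type constants. I would fix these constants by matching the $k=1$ and $k=2$ cases computed from the forced values above. This places $U_kb_\sigma$ in $\hat V_k$, since its coefficients lie in $x_1\cdots x_k\,\bQ(e_1,e_2,\dots)[x_1,\dots,x_k]$.

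Next I would verify \eqref{eq:intertwine-D}. Applying $\hat D_k$ to $U_kb_\sigma$ sums over the first color $i_1$, so the identity reduces to
\[
\sum_{c\notin i'}x_cF_\sigma(c,i')=\frac{e_{\sigma_1}}{e_{\sigma_1-1}}F_{\sigma_2,\dots,\sigma_k}(i').
\]
This should follow from the elementary recursion $e_m(\bm x)=e_m(\bm x\setminus x_c)+x_ce_{m-1}(\bm x\setminus x_c)$ together with the factorization of $F$ over rows. The step is essentially bookkeeping once the normalization is chosen correctly, and it is where the factors $e_{\sigma_1}/e_{\sigma_1-1}$ of $D_k$ arise.

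The main step is \eqref{eq:intertwine-M}. Compare the coefficients of $\hat b_{i,c}$ on both sides: the right side gives $x_cF_\sigma(i)$ with $c\notin i$, while the left side gives $\sum_y A(y\mid\sigma)F_{\sigma,y}(i,c)$. After dividing out common factors this becomes an identity of the form $\sum_{y\in T(\sigma)}A(y\mid\sigma)\,g(y)=g_0$. To prove it I would observe that the $A(y\mid\sigma)$ are exactly the residues of the rational function
\[
R_\sigma(z)=\frac{z}{z-1}\prod_r\frac{z-\sigma_r}{z-\sigma_r-1}
\]
at its poles $z=1$ and $z=\sigma_r+1$. This is a transition-measure form, consistent with \eqref{eq:ratio}, and in particular gives $\sum_y A(y\mid\sigma)=1$. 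The required identity should then follow from the residue theorem applied to $R_\sigma(z)$ times a polynomial in $z$ encoding how $F_{\sigma,y}$ depends on the new column $y$. This is the step I expect to be the main obstacle. The difficulty is choosing $F$ so that its dependence on $y$ is polynomial of low enough degree that the residue at infinity is the desired term. If the naive ansatz fails, I would instead define $U_k$ recursively: force $U_k$ on the images of words in the $M_j$ and $D_j$ as above, and prove well-definedness by showing these images span $V_k$ and that relations among words hold on both the tableau side and the color side.
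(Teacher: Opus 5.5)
\textbf{There is a genuine gap.} Your argument rests on an explicit $F_\sigma(i)$ that you never write down, and the form you describe is already wrong at $k=2$.

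First, $U_k$ is forced. (M) and (D) imply $U_kR_k=\hat R_kU_k$ for $R_k=M_{k-1}D_k-D_{k+1}M_k$. Together with (M), this relation determines $U_k$ column by column, so any ansatz must reproduce the forced values. Here is what they are at $k=2$.

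\emph{The column $b_{1,2}$.} From $M_1b_1=2b_{1,2}$ and your $U_1b_1=\sum_cx_c\hat b_c$, one gets $U_2b_{1,2}=\frac12\sum_{c\ne d}x_cx_d\hat b_{c,d}$, i.e.\ $F_{1,2}\equiv\frac12$. Your recipe instead fills the single free cell of the length-$2$ row with a color avoiding $d$ (or $c$ and $d$). That gives a nonconstant multiple of $(e_1-x_d)/e_1$ (or $(e_1-x_c-x_d)/e_1$).

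\emph{The columns $b_{2,1}$ and $b_{2,3}$.} (M) at $b_2$ gives $\frac12U_2b_{2,1}+\frac32U_2b_{2,3}=\hat M_1U_1b_2$. A second equation comes from $R_2b_{1,2}=e_1\bigl(\frac12b_{2,1}-\frac32b_{2,3}\bigr)$ via $U_2R_2=\hat R_2U_2$. Solving,
\[
F_{2,1}(c,d)=\frac{e_1-x_c+x_d}{2e_1},\qquad F_{2,3}(c,d)=\frac{e_1-x_c-x_d}{6e_1}.
\]
$F_{2,1}$ is not symmetric in $c,d$. So it is not a function over $\bQ(e_1,e_2,\dots)$ of the elementary symmetric functions in the variables other than $x_c,x_d$. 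It also does not factor as a row-$1$ part times a row-$2$ part.

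The marked boxes genuinely interact: in $b_{1,2}=\frac12M_1b_1$ they lie in the same row. This is also why your verification of (D) is not mere bookkeeping. The paper remarks that the entries of $U_k$ become very complicated for $k\ge3$ and gives no closed form.

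Your residue step is only asserted, and it contains a slip. The residues of $R_\sigma$ sum to $k+1$, the coefficient of $1/z$ at infinity, not to $1$. It is $\sum_yA(y\mid\sigma)/y$, Hikita's transition probabilities, that sums to $1$.

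Your fallback, defining $U_k$ on images of words and checking that relations transfer, has the right shape, since $U_k$ is forced. But it omits all of the content. The claim that every linear relation among the $Wb_\emptyset$ also holds among the $\hat W\hat b_\emptyset$ is essentially the theorem itself. You give neither a set of relations nor a reason they suffice.

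The paper supplies exactly this, in four steps. It introduces $R_k$, whose color analogue is the weighted cyclic shift $\hat R_k\hat b_i=x_{i_1}\hat b_{i_2,\dots,i_k,i_1}$, and swap operators $S_s$ mirroring $\hat S_s$. It proves the commutation identities of Proposition~\ref{prop:identities} on both sides. It orders $I_k$ by a rank so that (M) and (R) become a block-triangular system whose diagonal blocks are Cauchy-type and invertible (using $|T(w)|=|Z(w)|+1$), and defines $U_k$ from (M) and (R) alone. Finally, it derives (S) and then (D) by induction on rank, using the spanning property \eqref{eq:rank-span}.
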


\begin{definition}\label{def:tableau-matrix}
Let $\bG$ be a marked NUIG with $\hat M_\bG=\hat X_1\cdots\hat X_m$ as in Lemma~\ref{lem:word}. The \emph{tableau matrix} of $\bG$ is $M_\bG=X_1\cdots X_m$, where $X_s$ is $\hat X_s$ with the hat removed.
\end{definition}

Applying Theorem~\ref{thm:main} to each factor gives $U_bM_\bG=\hat M_\bG U_a$. Section~\ref{section:extra} describes $M_\bG$ directly using Hikita's tableaux.

\begin{example}\label{ex:p3}
Let $N=3$ and $\bG=(P_3,(3),(1))$. By Lemma~\ref{lem:word}, $\hat M_\bG=(\hat D_2\hat M_1)^2$ is the matrix in Figure~\ref{fig:p3}. On the tableau side, with rows and columns indexed by $b_1,b_2,b_3$ (dropping $b_4$, as in Section~\ref{section:trace}),
\[
D_2M_1=\begin{pmatrix}0&\frac{e_2}{2e_1}&\frac{2e_3}{3e_2}\\[2pt]2e_1&0&0\\[2pt]0&\frac{3e_2}{2e_1}&0\end{pmatrix},\qquad
(D_2M_1)^2=\begin{pmatrix}e_2&\frac{e_3}{e_1}&0\\[2pt]0&e_2&\frac{4e_1e_3}{3e_2}\\[2pt]3e_2&0&0\end{pmatrix}.
\]
The change of basis from Section~\ref{section:proof} is
\[
U_1=\begin{pmatrix}
x_1&\frac{x_1(x_2+x_3)}{2e_1}&\frac{e_3}{3e_2}\\[2pt]
x_2&\frac{x_2(x_1+x_3)}{2e_1}&\frac{e_3}{3e_2}\\[2pt]
x_3&\frac{x_3(x_1+x_2)}{2e_1}&\frac{e_3}{3e_2}
\end{pmatrix},
\]
and $U_1(D_2M_1)^2=\hat M_\bG U_1$. As a check,
\begin{align*}
D_1(D_2M_1)^2M_0&=\begin{pmatrix}e_1&\frac{e_2}{e_1}&\frac{e_3}{e_2}\end{pmatrix}\begin{pmatrix}e_2&\frac{e_3}{e_1}&0\\[2pt]0&e_2&\frac{4e_1e_3}{3e_2}\\[2pt]3e_2&0&0\end{pmatrix}\begin{pmatrix}1\\0\\0\end{pmatrix}\\
&=e_1e_2+3e_3=X_{P_3}(\bm x)=\hat D_1\hat M_\bG\hat M_0,
\end{align*}
and both traces equal $2e_2=X_{\operatorname{tr}\bG}(\bm x)$, where $\operatorname{tr}\bG$ is $P_2$.
\end{example}

\section{Proof of the main theorem}\label{section:proof}
We will use induction. To construct the inductive hypothesis, we define
\[
R_k=M_{k-1}D_k-D_{k+1}M_k,\qquad \hat R_k=\hat M_{k-1}\hat D_k-\hat D_{k+1}\hat M_k,
\]
and, for $1\le s<k$,
\[
S_sb_\sigma=\frac1{\sigma_{s+1}-\sigma_s}\,b_\sigma
+\left(1-\frac1{\sigma_{s+1}-\sigma_s}\right)b_{\tau_s\sigma},
\qquad \hat S_s\hat b_i=\hat b_{\tau_si},
\]
where $\tau_s(\sigma_1,\dots,\sigma_k)
=(\sigma_1,\dots,\sigma_{s-1},\sigma_{s+1},\sigma_s,\sigma_{s+2},\dots,\sigma_k)$ swaps the entries in positions $s$ and $s+1$.
In particular, $\hat{S}$ applies an adjacent swap and $\hat R_k\hat b_i=x_{i_1}\hat b_{i_2,\dots,i_k,i_1}$ applies a cyclic permutation. To simplify $R_k$, let $a\in\bN$ and $w\in I_{k-1}$, with
$a\notin w$, and let $(a,w)$ denote the tuple obtained by prepending $a$ to $w$. Direct substitution in the definition of $R_k$ yields
\begin{align}
R_kb_{a,w}
&=\frac{e_a}{e_{a-1}}\sum_y
  \bigl(A(y\mid w)-A(y\mid a,w)\bigr)b_{w,y}\notag\\
&=\frac{e_a}{e_{a-1}}\left(
  \left(\sum_{y\in T(w)}\frac{A(y\mid w)}{a+1-y}b_{w,y}\right)
  -A(a{+}1\mid a,w)b_{w,a+1}\right).
\label{eq:R}
\end{align}
We use the following identities for induction.

\begin{proposition}\label{prop:identities}
For $k\ge2$, the following hold for $\{M,D,R,S\}$ and $\{\hat{M},\hat{D},\hat{R},\hat{S}\}$:
\begin{align}
S_sM_{k-1}&=M_{k-1}S_s,\qquad S_sR_k=R_kS_{s+1} &&(1\le s\le k-2),\label{eq:swap-basic}\\
D_kR_k&=R_{k-1}D_kS_1,\label{eq:swap-delete}\\
S_{k-1}M_{k-1}R_{k-1}&=R_kM_{k-1},\qquad
S_{k-1}M_{k-1}M_{k-2}=M_{k-1}M_{k-2},\label{eq:last-swap}\\
S_{k-1}R_k^2&=R_k^2S_1.\label{eq:swap-square}
\end{align}
\end{proposition}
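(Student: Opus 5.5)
The plan is to verify each identity separately for the hatted and unhatted families. The hatted side is a direct check on basis vectors $\hat b_i$. Recall
\[
\hat M_{k-1}\hat b_i=\sum_{c\notin i}x_c\hat b_{i,c},\qquad \hat D_k\hat b_i=\hat b_{i_2,\dots,i_k},\qquad \hat R_k\hat b_i=x_{i_1}\hat b_{i_2,\dots,i_k,i_1},
\]
and $\hat S_s$ swaps positions $s,s+1$. Each hatted identity is then an identity between permutations of positions, possibly appending a new color $c$.
\begin{itemize}
\item For \eqref{eq:swap-basic}: appending at the end commutes with swapping earlier positions. Also $\hat R_k$ rotates left, so swapping positions $s,s+1$ after rotation is the same as swapping $s+1,s+2$ before it.
\item For \eqref{eq:swap-delete}: both sides produce $x_{i_1}\hat b_{i_3,\dots,i_k,i_1}$ or the analogous tuple. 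One checks that $\hat S_1$ exchanges which of $i_1,i_2$ is dropped and which is rotated.
\item For \eqref{eq:last-swap}: the second identity holds because summing over two appended colors is symmetric under swapping them. The first holds because $\hat R_k\hat M_{k-1}$ appends $c$ and then rotates, giving $x_{i_1}x_c\hat b_{i_2,\dots,c,i_1}$. Meanwhile $\hat M_{k-1}\hat R_{k-1}$ gives $x_{i_1}x_c\hat b_{i_2,\dots,i_1,c}$, and the final swap matches them.
\item For \eqref{eq:swap-square}: $\hat R_k^2$ rotates by two, which conjugates the swap of positions $1,2$ to the swap of positions $k-1,k$.
\end{itemize}

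For the unhatted side I would work with the explicit formula \eqref{eq:R} for $R_k b_{a,w}$ together with the ratio identity \eqref{eq:ratio}. The identity $S_sM_{k-1}=M_{k-1}S_s$ for $s\le k-2$ reduces to showing that $A(y\mid\sigma)$ is invariant under permuting $\sigma$ and that $S_s$ only involves positions not affected by appending $y$. This is immediate, since $A(y\mid\sigma)$ depends only on the set of entries of $\sigma$ (with $r$ indicating which entry equals $y-1$). Similarly, $S_sR_k=R_kS_{s+1}$ follows from \eqref{eq:R}: the coefficient involves $a$, $w$ only through the set $w$ and the value $a$, and $S_{s+1}$ acts on positions of $w$. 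For \eqref{eq:swap-delete}, I would expand $D_kR_kb_{a,w}$ using \eqref{eq:R}. This gives a factor $\frac{e_a}{e_{a-1}}\frac{e_{w_1}}{e_{w_1-1}}$ times a combination of $b_{w_2,\dots,w_{k-1},y}$. I would compare it with $R_{k-1}D_kS_1 b_{a,w}$, which mixes $b_{a,w}$ and $b_{w_1,a,\dots}$ with coefficients $\frac1{w_1-a}$ and $1-\frac1{w_1-a}$. After factoring out the common $e$-ratios, the identity becomes a rational function identity in $a,w_1$ and the sums over $y$. This should follow from \eqref{eq:ratio} together with partial fractions in $y$.

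The main obstacle is \eqref{eq:last-swap} and especially \eqref{eq:swap-square}, where $S_{k-1}$ mixes the last two entries. Here $y$ is appended and a new value interacts with $a+1$. For the second identity in \eqref{eq:last-swap}, I would show that the coefficients satisfy a symmetry of the form
\[
A(y\mid\sigma)A(z\mid\sigma,y)\frac{1}{z-y}+(\text{swap})=\cdots.
\]
Concretely, the image is invariant under $S_{k-1}$ exactly when
\[
A(y\mid\sigma)A(z\mid\sigma,y)(z-y-1)=A(z\mid\sigma)A(y\mid\sigma,z)(y-z-1)\cdot\frac{z-y}{y-z}\text{-type relation},
\]
which I expect to derive from \eqref{eq:ratio} when $z\ne y+1$. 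The boundary cases $z=y+1$ and $y=z+1$ need separate treatment, where the $S$-coefficient $1-\frac1{\pm1}$ vanishes or doubles. For \eqref{eq:swap-square}, I would compute $R_k^2b_{a,a',w}$ via two applications of \eqref{eq:R}. This gives a double sum over $y,z$ with coefficients that are products of $\frac{A}{a+1-y}$ terms plus the special terms at $y=a+1$, $z=a'+1$. I would then verify that applying $S_{k-1}$ to the last two appended entries matches applying $S_1$ to $(a,a')$ first. I would first try to organize the check by the residue structure in $a$ and $a'$, reducing it to the same two-variable identity as in \eqref{eq:last-swap}. Failing that, I would split into cases according to whether $\{y,z\}$ meets $\{a+1,a'+1\}$.
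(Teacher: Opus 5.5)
Your hatted checks are correct and match the paper. So are your plans for \eqref{eq:swap-basic}, \eqref{eq:swap-delete} and the second identity of \eqref{eq:last-swap} on the unhatted side. Your displayed relation there, read with $\frac{z-y}{y-z}=-1$, is exactly the invariance condition $c_{y,z}(z-y-1)=c_{z,y}(z-y+1)$ for $c_{y,z}=A(y\mid\sigma)A(z\mid\sigma,y)$, and the boundary cases behave as you say. There are, however, two genuine gaps.

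\textbf{First gap: the unhatted first identity of \eqref{eq:last-swap}.} You never address $S_{k-1}M_{k-1}R_{k-1}=R_kM_{k-1}$ for the unhatted operators; you only check its hatted version. It needs a real computation:
\begin{itemize}
\item expand $R_kM_{k-1}b_{a,\sigma}$ and $M_{k-1}R_{k-1}b_{a,\sigma}$ on the basis vectors $b_{\sigma,y,z}$;
\item use \eqref{eq:ratio} to show both sides have the generic coefficient $-\frac{e_a}{e_{a-1}}A(y\mid\sigma)A(z\mid\sigma)\frac{(y-a)(z-y)}{(y-a-1)(z-y-1)(z-a-1)}$ (on the left side this combines the two terms produced by $S_{k-1}$);
\item treat $y=a+1$, $z=a+1$ and $z=y\pm1$ separately.
\end{itemize}

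\textbf{Second gap: \eqref{eq:swap-square}.} You only sketch a brute-force double-sum expansion of $R_k^2$, with a fallback case split, so nothing is actually established. The missing idea is that no computation is needed. The identity follows formally from the others, at levels $k$ and $k+1$, together with $S_s^2=1$ and $S_sD_{k+1}=D_{k+1}S_{s+1}$. Expand one factor as $R_k=M_{k-1}D_k-D_{k+1}M_k$. Then
\begin{align*}
S_{k-1}R_kM_{k-1}D_k&=M_{k-1}R_{k-1}D_k=M_{k-1}D_kR_kS_1,\\
S_{k-1}R_kD_{k+1}M_k&=S_{k-1}D_{k+1}R_{k+1}S_1M_k=D_{k+1}S_kR_{k+1}M_kS_1=D_{k+1}M_kR_kS_1.
\end{align*}
The ingredients are:
\begin{itemize}
\item the first identity of \eqref{eq:last-swap} at levels $k$ and $k+1$;
\item \eqref{eq:swap-delete} at levels $k$ and $k+1$, rewritten via $S_1^2=1$ as $R_{k-1}D_k=D_kR_kS_1$;
\item $S_1M_k=M_kS_1$, which is \eqref{eq:swap-basic} at level $k+1$.
\end{itemize}
Subtracting the two lines gives $S_{k-1}R_k^2=R_k^2S_1$, and the same chain works verbatim for the hatted operators.

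This route rests on exactly the identity you skipped. Closing the first gap is what makes the second one a few lines of algebra rather than a large case analysis.
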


\begin{proof}
Equation~\eqref{eq:swap-basic} follows because $A(y\mid\sigma)$ depends only
on the set of entries of $\sigma$. For Equation~\eqref{eq:swap-delete},
let $\sigma\in I_{k-2}$ and let $a,b\notin\sigma$ be distinct, then
\begin{align*}
D_kR_kb_{a,b,\sigma}
 &=\frac{e_ae_b}{e_{a-1}e_{b-1}}\sum_{y\notin\sigma}
   \bigl(A(y\mid b,\sigma)-A(y\mid a,b,\sigma)\bigr)b_{\sigma,y}\\
 &=\frac{e_ae_b}{e_{a-1}e_{b-1}}\sum_{y\notin\sigma}
   \biggl(\frac{A(y\mid\sigma)-A(y\mid b,\sigma)}{b-a}\\
 &\qquad+\left(1-\frac1{b-a}\right)
   \bigl(A(y\mid\sigma)-A(y\mid a,\sigma)\bigr)\biggr)b_{\sigma,y}\\
 &=\frac1{b-a}\,\frac{e_a}{e_{a-1}}R_{k-1}b_{b,\sigma}
   +\left(1-\frac1{b-a}\right)\frac{e_b}{e_{b-1}}R_{k-1}b_{a,\sigma}\\
 &=R_{k-1}D_kS_1b_{a,b,\sigma}.
\end{align*}
For Equation~\eqref{eq:last-swap}, let $\sigma\in I_{k-2}$, let $y,z$ range
over distinct elements of $T(\sigma)$, and let
$P=\sum_yA(y\mid\sigma)A(y{+}1\mid\sigma,y)\,b_{\sigma,y,y+1}$, which
$S_{k-1}$ fixes, then
\begin{align*}
S_{k-1}M_{k-1}M_{k-2}b_\sigma
 &=S_{k-1}\Bigl(P+\sum_{y,z}\frac{z-y}{z-y-1}
   A(y\mid\sigma)A(z\mid\sigma)\,b_{\sigma,y,z}\Bigr)\\
 &=P+\sum_{y,z}A(y\mid\sigma)A(z\mid\sigma)
   \Bigl(\frac{b_{\sigma,y,z}}{z-y-1}+b_{\sigma,z,y}\Bigr)\\
 &=P+\sum_{y,z}\frac{z-y}{z-y-1}
   A(y\mid\sigma)A(z\mid\sigma)\,b_{\sigma,y,z}
 =M_{k-1}M_{k-2}b_\sigma.
\end{align*}
For its first identity, expanding the products gives
\begin{align*}
R_kM_{k-1}b_{a,\sigma}
&=\frac{e_a}{e_{a-1}}
 \sum_{\substack{y,z\notin\sigma\\y\ne z,\ y\ne a}}
 A(y\mid a,\sigma)
 \bigl(A(z\mid\sigma,y)-A(z\mid a,\sigma,y)\bigr)b_{\sigma,y,z}\\
&=S_{k-1}M_{k-1}R_{k-1}b_{a,\sigma}.
\end{align*}
For the second equality, if $y\ne a+1$ and $z\notin\{y+1,a+1\}$, Equation~\eqref{eq:ratio} gives both sides the coefficient $-\frac{e_a}{e_{a-1}}A(y\mid\sigma)A(z\mid\sigma)\frac{(y-a)(z-y)}{(y-a-1)(z-y-1)(z-a-1)}$ on $b_{\sigma,y,z}$. The remaining cases are similar.
The hatted identities follow directly by appending, deleting, and swapping
colors; for example,
\begin{align*}
\hat D_k\hat R_k\hat b_{a,b,\sigma}
 &=x_a\hat b_{\sigma,a}
  =\hat R_{k-1}\hat D_k\hat S_1\hat b_{a,b,\sigma},\\
\hat S_{k-1}\hat M_{k-1}\hat R_{k-1}\hat b_{a,\sigma}
 &=x_a\sum_{c\notin(a,\sigma)}x_c\hat b_{\sigma,c,a}
  =\hat R_k\hat M_{k-1}\hat b_{a,\sigma}.
\end{align*}
Finally, $S_s^2=1$ and $S_sD_{k+1}=D_{k+1}S_{s+1}$ directly from the
definitions. Equations~\eqref{eq:last-swap}, \eqref{eq:swap-delete}, and
\eqref{eq:swap-basic} now give (for the hatted identities as well)
\begin{align*}
S_{k-1}R_k^2
&=S_{k-1}R_kM_{k-1}D_k-S_{k-1}R_kD_{k+1}M_k\\
&=M_{k-1}R_{k-1}D_k-D_{k+1}M_kR_kS_1\\
&=M_{k-1}D_kR_kS_1-D_{k+1}M_kR_kS_1\\
&=R_k^2S_1,
\end{align*}
proving Equation~\eqref{eq:swap-square}.
\end{proof}

\begin{proof}[Proof of Theorem~\ref{thm:main}]
We construct $U_k$ satisfying
Equations~\eqref{eq:intertwine-M} and \eqref{eq:intertwine-D}, together with
\begin{align}
U_kR_k&=\hat R_kU_k,\tag{R}\label{eq:intertwine-R}\\
U_kS_s&=\hat S_sU_k\quad(1\le s<k).\tag{S}\label{eq:intertwine-S}
\end{align}
For $k=1$, let $e_r(\bm x\setminus x_c)$ denote the elementary symmetric function of degree $r$ in the variables $(x_d)_{d\ne c}$, and define
\[
U_1b_a=\frac1{a\,e_{a-1}}\sum_cx_c\,e_{a-1}(\bm x\setminus x_c)\,\hat b_c.
\]
With rows indexed by the $\hat b_c$ and columns by the $b_a$, its upper-left corner is
\[
U_1=
\begin{pmatrix}
x_1 & \dfrac{x_1(e_1-x_1)}{2e_1} & \cdots\\
x_2 & \dfrac{x_2(e_1-x_2)}{2e_1} & \cdots\\
\vdots & \vdots & \ddots
\end{pmatrix}.
\]
The definitions give
\[
R_1b_a=\frac{e_a}{e_{a-1}}\left(\frac1a b_1-\frac{a+1}{a}b_{a+1}\right),
\qquad \hat R_1\hat b_c=x_c\hat b_c.
\]
Together with
\[
e_a-e_a(\bm x\setminus x_c)=x_ce_{a-1}(\bm x\setminus x_c),
\qquad \sum_cx_ce_{a-1}(\bm x\setminus x_c)=ae_a,
\]
this yields
\begin{align*}
U_1M_0b_{\emptyset}
 &=U_1b_1=\sum_cx_c\hat b_c=\hat M_0U_0b_{\emptyset},\\
\hat D_1U_1b_a
 &=\frac1{a e_{a-1}}\sum_cx_ce_{a-1}(\bm x\setminus x_c)\,\hat b_{\emptyset}
 =\frac{e_a}{e_{a-1}}\hat b_{\emptyset}=U_0D_1b_a,\\
U_1R_1b_a
 &=\frac1{a e_{a-1}}\sum_cx_c^2e_{a-1}(\bm x\setminus x_c)\,\hat b_c
 =\hat R_1U_1b_a.
\end{align*}
Thus Equations~\eqref{eq:intertwine-M}, \eqref{eq:intertwine-D}, and
\eqref{eq:intertwine-R} hold, while Equation~\eqref{eq:intertwine-S} is
vacuous. The formula for $R_1b_a$, together with $b_1=M_0b_{\emptyset}$,
also gives $V_1=M_0V_0+R_1V_1$, the case $k=1$ of Equation~\eqref{eq:rank-span} below.

Now fix $k\ge2$ and assume the claims below level $k$. We construct $U_k$
by induction on its columns. Equation~\eqref{eq:intertwine-R} applied to $b_{a,w}$, expanded using Equation~\eqref{eq:R}, involves
$U_kb_{a,w}$, the columns $U_kb_{w,y}$ with $y\in T(w)$, and, when
$a+1\notin w$, the column $U_kb_{w,a+1}$; the $T(w)$-columns occur together
in Equation~\eqref{eq:intertwine-M}. Thus, by ordering the basis vectors
indexed by $I_k$, we can use the two equations inductively to construct
$U_k$. For $w\in I_{k-1}$
and $y\notin w$, give $b_{w,y}$ the rank $r(w,y)=(r_j(w,y))_{j\ge1}$, where
\[
r_j(w,y)=
\begin{cases}
k+1-s,&j=w_s,\\
1,&j=y,\ y\notin T(w),\\
0,&\text{otherwise}.
\end{cases}
\]
We compare $r(w,y)<r(w',y')$ if $r_j(w,y)<r_j(w',y')$ where $j$ is the largest differing index. Roughly, $r(w,y)<r(w',y')$ if $w'$ has larger entries than
$w$, and moving large entries to the right lowers the rank. Each rank has
finitely many predecessors, so we can fix $w\in I_{k-1}$ and assume inductively that
$U_k$ is defined on every column of smaller rank. The columns
$U_kb_{w,y}$ for $y\in T(w)$ have the same rank, and we solve for them
simultaneously. Let
$Z(w)=\{z\in w:z\ge2,\ z-1\notin w\}$. Equation~\eqref{eq:intertwine-M}
applied to $b_w$ and Equation~\eqref{eq:intertwine-R} applied to
$b_{z-1,w}$ for $z\in Z(w)$ give
\begin{gather}
\sum_{y\in T(w)}A(y\mid w)U_kb_{w,y}
 =\hat M_{k-1}U_{k-1}b_w,\label{eq:block-M}\\
\sum_{y\in T(w)}\frac{A(y\mid w)}{z-y}U_kb_{w,y}
 =\frac{e_{z-2}}{e_{z-1}}\hat R_kU_kb_{z-1,w}
\quad(\forall z\in Z(w)).\label{eq:block-R}
\end{gather}
The column $b_{z-1,w}$ has smaller rank than $b_{w,y}$ for $y\in T(w)$.
The right side of Equation~\eqref{eq:block-M} is known from $U_{k-1}$, and
those of Equation~\eqref{eq:block-R} from the induction on rank. Unpacking $T$ and $Z$ gives $|T(w)|=|Z(w)|+1$, so the system is square. To show the coefficient matrix is invertible, note $A(y\mid w) \neq 0$, so a null vector $(c_y)_{y\in T(w)}$ must satisfy
\[
\sum_{y\in T(w)}c_y=0,\qquad
\sum_{y\in T(w)}\frac{c_y}{z-y}=0\quad(\forall z\in Z(w)).
\]
For $z\in Z(w)$, note
\[\sum_{y\in T(w)}\frac{c_y}{z-y}=\frac{P(z)+\sum_{y\in T(w)}c_y z^{|T(w)|-1}}{Q(z)}=\frac{P(z)}{Q(z)}=0,\]
where $P$ has degree at most $|T(w)|-2$ and $Q(z)=\prod_{y\in T(w)}(z-y)\neq 0$ since $Z(w) \cap T(w)=\emptyset$.
Thus $P(z)=0$ for all $z\in Z(w)$, so $P=0$ by its degree bound. Plugging in $z=y\in T(w)$
\[
0=P(y)=\sum_{y'\in T(w)}c_{y'}\prod_{y''\in T(w)\setminus\{y'\}}(y-y'') = c_y\prod_{y'\in T(w)\setminus\{y\}}(y-y'),
\]
forcing $c_y=0$, meaning the only null vector is 0. Thus the system is invertible and determines $U_kb_{w,y}$ for $y\in T(w)$.

For the remaining $b_{w,y}$ with $y\notin T(w)$ and $y\notin w$, Equation~\eqref{eq:intertwine-R} gives
\begin{equation}\label{eq:free-R}
A(y\mid y-1,w)U_kb_{w,y}
 =\left(\sum_{z\in T(w)}\frac{A(z\mid w)}{y-z}U_kb_{w,z}\right)
  -\frac{e_{y-2}}{e_{y-1}}\hat R_kU_kb_{y-1,w}.
\end{equation}
The columns $U_kb_{w,z}$ were determined above, and $r(y-1,w)<r(w,y)$:
the largest entry of $(w,y)$ moves right if it lies in $w$, and disappears otherwise.

Thus $U_kb_{y-1,w}$ is known by induction. Since $A(y\mid y-1,w)\ne0$, this equation determines the remaining columns. Every instance of
Equations~\eqref{eq:intertwine-M} and \eqref{eq:intertwine-R} is used
exactly once, so both hold.
Inverting the coefficient matrix in Equations~\eqref{eq:block-M} and
\eqref{eq:block-R} also expresses $b_{w,y}$, $y\in T(w)$, as linear
combinations of $M_{k-1}b_w$ and $R_kb_{z-1,w}$, $z\in Z(w)$.
Equation~\eqref{eq:R} expresses each remaining $b_{w,y}$ using
$b_{w,z}$, $z\in T(w)$, and $R_kb_{y-1,w}$, whose source columns have
smaller rank. Thus, for every $r(w,y)=r$,
\begin{equation}\label{eq:rank-span}
\begin{aligned}
b_{w,y}&\in\operatorname{span}\bigl(\{M_{k-1}b_w\}
 \cup\{R_kb_{a,w}:a\in\bN\setminus w,\ a+1\in w\cup\{y\}\}\bigr)\\
&\subseteq M_{k-1}V_{k-1}
 +\operatorname{span}\{R_kb_\sigma:r(\sigma)<r\}.
\end{aligned}
\end{equation}

With $U_k$ constructed, we prove Equation~\eqref{eq:intertwine-S} by
induction on rank, keeping $k$ fixed. The inductive hypothesis at rank
$r$ states for every $b_\sigma$ with $r(\sigma)<r$,
\begin{align}
U_kS_sb_\sigma&=\hat S_sU_kb_\sigma\qquad(1\le s<k),\notag\\
U_kS_{k-1}R_kb_\sigma&=\hat S_{k-1}\hat R_kU_kb_\sigma.
\label{eq:intertwine-SR}
\end{align}
To establish the two equalities at rank $r$, Equation~\eqref{eq:rank-span}
reduces the check to 
\begin{align*}
    U_kS_sM_{k-1}&=\hat{S}_sU_kM_{k-1}, &U_kS_sR_kb_{\sigma} &= \hat{S}_sU_kR_kb_{\sigma} &(\forall r(\sigma)<r),\\
    U_kS_{k-1}R_kM_{k-1}&=\hat{S}_{k-1}\hat{R}_kU_kM_{k-1},
    &U_kS_{k-1}R_k^2b_{\sigma}&=\hat{S}_{k-1}\hat{R}_kU_kR_kb_{\sigma}&(\forall r(\sigma)<r).
\end{align*}
On $M_{k-1}V_{k-1}$, Equation~\eqref{eq:swap-basic} and induction on $k$
give, for $s\le k-2$,
\[
U_kS_sM_{k-1}=\hat S_sU_kM_{k-1}.
\]
For $s=k-1$, Equation~\eqref{eq:last-swap} and the established
Equations~\eqref{eq:intertwine-M} and \eqref{eq:intertwine-R} give
\begin{align*}
U_kS_{k-1}M_{k-1}M_{k-2}
 &=\hat S_{k-1}U_kM_{k-1}M_{k-2},\\
U_kS_{k-1}M_{k-1}R_{k-1}
 &=\hat S_{k-1}U_kM_{k-1}R_{k-1},
\end{align*}
which combined with Equation~\eqref{eq:rank-span} at level $k-1$ gives
$U_kS_{k-1}M_{k-1}=\hat S_{k-1}U_kM_{k-1}$.
The second required equality follows from
Equation~\eqref{eq:last-swap} and $S_{k-1}^2=\hat S_{k-1}^2=1$:
\[
U_kS_{k-1}R_kM_{k-1}
 =U_kM_{k-1}R_{k-1}
 =\hat M_{k-1}\hat R_{k-1}U_{k-1}
 =\hat S_{k-1}\hat R_kU_kM_{k-1}.
\]
On $R_kb_\sigma$ with $r(\sigma)<r$, the inductive hypothesis and
Equations~\eqref{eq:swap-basic} and \eqref{eq:swap-square} give the
remaining required equalities:
\begin{align*}
U_kS_sR_kb_\sigma
 &=\hat R_kU_kS_{s+1}b_\sigma
 =\hat S_sU_kR_kb_\sigma\qquad(1\le s\le k-2),\\
U_kS_{k-1}R_kb_\sigma
 &=\hat S_{k-1}\hat R_kU_kb_\sigma
 =\hat S_{k-1}U_kR_kb_\sigma,\\
U_kS_{k-1}R_k^2b_\sigma
 &=\hat R_k^2U_kS_1b_\sigma
 =\hat S_{k-1}\hat R_kU_kR_kb_\sigma,
\end{align*}
proving Equation~\eqref{eq:intertwine-S}. Finally, we prove Equation~\eqref{eq:intertwine-D} by a similar induction on rank,
keeping $k$ fixed. Fix a rank $r$. The inductive hypothesis is
\[
\hat D_kU_kb_\sigma=U_{k-1}D_kb_\sigma\qquad(\forall r(\sigma)<r).
\]
Yet again, to prove this equality for $b_{w,y}$ of rank $r$,
Equation~\eqref{eq:rank-span} reduces this to showing
\begin{align*}
\hat D_kU_kM_{k-1}&=U_{k-1}D_kM_{k-1},\\
\hat D_kU_kR_kb_{a,w}&=U_{k-1}D_kR_kb_{a,w}
\qquad(\forall a\in\bN\setminus w,\ a+1\in w\cup\{y\}).
\end{align*}
On $M_{k-1}V_{k-1}$, the definition of $R_{k-1}$ and induction on $k$
give
\begin{align*}
\hat D_kU_kM_{k-1}
 &=(\hat M_{k-2}\hat D_{k-1}-\hat R_{k-1})U_{k-1}\\
 &=U_{k-1}(M_{k-2}D_{k-1}-R_{k-1})
 =U_{k-1}D_kM_{k-1}.
\end{align*}
On $R_kb_{a,w}$, Equations~\eqref{eq:swap-delete},
\eqref{eq:intertwine-R}, and \eqref{eq:intertwine-S} give
\begin{align*}
\hat D_kU_kR_kb_{a,w}
 &=\hat R_{k-1}\hat D_kU_kS_1b_{a,w},\\
U_{k-1}D_kR_kb_{a,w}
 &=\hat R_{k-1}U_{k-1}D_kS_1b_{a,w}.
\end{align*}
The vector $S_1b_{a,w}$ only involves basis vectors of rank smaller than $r$: if $a+1=w_1$, then $S_1b_{a,w}=b_{a,w}$, and otherwise $\tau_1(a,w)=(w_1,a,w_2,\dots,w_{k-1})$ has smaller rank, since the entries after $w_1$ move right, $y$ drops out, and $a<a+1\in w\cup\{y\}$. Hence the inductive hypothesis makes the two displayed
right sides equal. Therefore
\begin{align*}
\hat D_kU_kR_kb_{a,w}&=U_{k-1}D_kR_kb_{a,w}
\qquad(\forall a\in\bN\setminus w,\ a+1\in w\cup\{y\}).
\end{align*}
This completes the induction, proving Equation~\eqref{eq:intertwine-D} and Theorem~\ref{thm:main}.
\end{proof}

\section{Traces and \texorpdfstring{$e$}{e}-positivity}\label{section:trace}

We have found a change of basis family $U_k$ that relates the color matrices to tableau matrices. To transport the trace property from coloring to tableau, we need to show that when restricted to a finite yet sufficient amount of colors, the maps in Theorem~\ref{thm:main} are invertible.

We fix $N\in\bN$ and work over $\bQ(x_1,\dots,x_N)$, so $I_k$ is now
the set of $k$-tuples of distinct entries in $[N]$, and $V_k$ has basis
$\{b_\sigma:\sigma\in I_k\}$.
Let $M_k^{(N)}$ be the restriction of $M_k$ to these tuples, omitting any
term $b_{\sigma,N+1}$, and let $D_k$ act by the same formula as before,
with $e_j=e_j(x_1,\dots,x_N)$. We want to find a map $U_k^{(N)}$ that is invertible with
\[
U_k^{(N)}M_{k-1}^{(N)}=\hat M_{k-1}U_{k-1}^{(N)},
\qquad
\hat D_kU_k^{(N)}=U_{k-1}^{(N)}D_k.
\]
It turns out $U_k^{(N)}$ comes from setting $x_c=0$ for $c>N$ in $U_k$. We
need to show this is well-defined, that its image lies in $\hat V_k$, and
that it satisfies the two equations above.

First, the restriction is well-defined. Let $\sigma$ be a $k$-tuple of
distinct positive integers. By the definition of $\hat V_k$ with
$N=\infty$, $U_kb_\sigma$ decomposes as
\[
U_kb_\sigma=\sum_iF_\sigma(x_{i_1},\dots,x_{i_k})\hat b_i,\]
where we will show $F_\sigma(x_{i_1},\dots,x_{i_k})\in x_{i_1}\cdots x_{i_k}\,\bQ[\bm x][e_j^{-1}:j<\max\sigma]$.
This holds for $U_1$ and is preserved by $\hat M_{k-1}$ and
$\hat R_k$, which multiply each new color by its variable. By induction on
rank, Equations~\eqref{eq:block-M} and~\eqref{eq:block-R} divide only by
$e_j$ with $j<\max w$, and Equation~\eqref{eq:free-R} only by $e_j$ with
$j<\max(w,y)$.

Thus if $\sigma$ has entries at most $N+1$, no $e_j$ with
$j>N$ appears in a denominator, and we may set $x_c=0$ for $c>N$. This
defines $U_k^{(N)}b_\sigma$ for every $\sigma$ with entries at most $N+1$. In particular, $U_k^{(N)}$ is defined on $V_k$.

Second, the image lies in $\hat V_k$, since $x_{i_1}\cdots x_{i_k}$ divides
each $F_\sigma(x_{i_1},\dots,x_{i_k})$, so every term with a color outside
$[N]$ vanishes.

Third, specializing Equation~\eqref{eq:intertwine-D} gives
$\hat D_kU_k^{(N)}=U_{k-1}^{(N)}D_k$. Specializing
Equation~\eqref{eq:intertwine-M} at $b_w$ with $w\in I_{k-1}$ gives
\[
U_k^{(N)}M_{k-1}b_w=\hat M_{k-1}U_{k-1}^{(N)}b_w,
\qquad
M_{k-1}b_w=M_{k-1}^{(N)}b_w+A(N{+}1\mid w)\,b_{w,N+1}.
\]
We use the following lemma to show $U_k^{(N)}b_{w,N+1}=0$, so
$U_k^{(N)}M_{k-1}^{(N)}=\hat M_{k-1}U_{k-1}^{(N)}$.

\begin{lemma}\label{lem:vanish}
If $\sigma$ has entries at most $N+1$ and contains $N+1$, then $U^{(N)}_kb_\sigma=0$.
\end{lemma}

\begin{proof}
We use downward induction on $k$. For $k=N+1$, there are no color tuples
with distinct entries, so $\hat V_{N+1}=0$. Now suppose $k\le N$ and
the claim holds at level $k+1$. Since $\sigma$ contains $N+1$, there is
an $a\in[N]\setminus\sigma$. Equation~\eqref{eq:intertwine-D} gives
\[
0=\hat D_{k+1}U_{k+1}^{(N)}b_{a,\sigma}
 =\frac{e_a}{e_{a-1}}U_k^{(N)}b_\sigma,
\]
proving the claim.
\end{proof}

From now on, $U_k^{(N)}$ denotes its restriction to $V_k$, so
$U_k^{(N)}\colon V_k\to\hat V_k$ satisfies the two equations above.

\begin{proposition}\label{prop:finite}
For $0\le k\le N$, the map
$U_k^{(N)}\colon V_k\to\hat V_k$ is invertible. For $1\le k\le N$,
\[
U_k^{(N)}M_{k-1}^{(N)}=\hat M_{k-1}U_{k-1}^{(N)},
\qquad
\hat D_kU_k^{(N)}=U_{k-1}^{(N)}D_k.
\]
\end{proposition}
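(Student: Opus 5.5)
The two intertwining relations were essentially established just before Lemma~\ref{lem:vanish}. The relation $\hat D_kU_k^{(N)}=U_{k-1}^{(N)}D_k$ is the specialization of \eqref{eq:intertwine-D}. The relation $U_k^{(N)}M_{k-1}^{(N)}=\hat M_{k-1}U_{k-1}^{(N)}$ follows from specializing \eqref{eq:intertwine-M} and discarding the term $A(N{+}1\mid w)\,U_k^{(N)}b_{w,N+1}$, which vanishes by Lemma~\ref{lem:vanish}. So the real content is invertibility. Both $V_k$ and $\hat V_k$ have dimension $|I_k|=N!/(N-k)!$ over $\bQ(x_1,\dots,x_N)$, so it suffices to show that $U_k^{(N)}$ is surjective.

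I would argue by induction on $k$. At $k=0$ we have $U_0^{(N)}=1$. Assume $U_{k-1}^{(N)}$ is invertible. The image of $U_k^{(N)}$ contains $U_k^{(N)}M_{k-1}^{(N)}V_{k-1}=\hat M_{k-1}\hat V_{k-1}$. It is also stable under $\hat R_k$ and every $\hat S_s$, because the specialized forms of \eqref{eq:intertwine-R} and \eqref{eq:intertwine-S} give $\hat R_kU_k^{(N)}b_\sigma=U_k^{(N)}R_kb_\sigma$. For this I need $R_kb_\sigma$ to stay inside $V_k$ modulo vectors killed by Lemma~\ref{lem:vanish}: any term $b_{\dots,N+1}$ in $R_k b_\sigma$ maps to $0$, so this works.

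Next I would show that $\hat M_{k-1}\hat V_{k-1}$, closed under $\hat R_k$ and $\hat S_s$, is all of $\hat V_k$. We have $\hat M_{k-1}\hat b_i=\sum_{c\notin i}x_c\hat b_{i,c}$. The operator $\hat R_k$ cyclically rotates and multiplies by $x_{i_1}$, which is a nonzero scalar, and $\hat S_s$ permutes positions. Together, $\hat R_k$ and the $\hat S_s$ generate all position permutations up to nonzero scalars, acting diagonally on the basis $\hat b_i$. So the image is spanned by permuted versions of the vectors $v_i=\sum_{c\notin i}x_c\hat b_{i,c}$. Equivalently, for each $(k-1)$-subset, forgetting order, the image contains $\sum_{c}x_c\hat b_{\pi(i,c)}$ for every permutation $\pi$.

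To extract a single basis vector $\hat b_j$, I would use the scalar weights. A product of $\hat R_k$'s and $\hat S$'s that fixes the ordering $j$ acts on $\hat b_j$ by a monomial; for instance, $\hat R_k^k$ multiplies $\hat b_j$ by $x_{j_1}\cdots x_{j_k}$, which is uniform and useless. Better is to use the diagonal operators obtained by composing $\hat R_k$ with suitable $\hat S$'s, such as $\hat S_{1}\cdots\hat S_{k-1}\hat R_k$. This maps $\hat b_i\mapsto x_{i_1}\hat b_i$, multiplying by the first coordinate's variable. Moving positions around with the $\hat S_s$ then gives operators $\hat b_i\mapsto x_{i_p}\hat b_i$ for each $p$. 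The image is therefore a module over the algebra generated by these operators. Applying the operator for position $k$ to $v_i$ gives $\sum_c x_c^2\hat b_{i,c}$, and in general $\sum_c x_c^m\hat b_{i,c}$. A Vandermonde argument in the distinct $x_c$ then isolates each $\hat b_{i,c}$. Hence the image is all of $\hat V_k$, and $U_k^{(N)}$ is invertible. As an alternative, I would try to verify directly from Lemma~\ref{lem:vanish} and \eqref{eq:rank-span} that $V_k=M_{k-1}^{(N)}V_{k-1}+R_k$-span.

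The main obstacle I expect is bookkeeping in the specialization: checking that \eqref{eq:intertwine-R} and \eqref{eq:intertwine-S} survive setting $x_c=0$ for $c>N$, restricted to tuples in $[N]$. This amounts to confirming that every $b_\sigma$ produced with an entry $N+1$ is killed by Lemma~\ref{lem:vanish}, and that no entry exceeds $N+1$. The entries of $R_kb_\sigma$ are at most $\max\sigma+1$, so this holds.
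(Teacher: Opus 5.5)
Your proof is correct. Its skeleton is the paper's:
\begin{itemize}
\item the two relations come from specializing \eqref{eq:intertwine-M} and \eqref{eq:intertwine-D}, with Lemma~\ref{lem:vanish} killing the $b_{w,N+1}$ term;
\item invertibility reduces to surjectivity by the dimension count;
\item surjectivity is proved by induction on $k$, using that the image contains $\hat M_{k-1}\hat V_{k-1}$ and is stable under $\hat R_k$;
\item a Vandermonde argument in the distinct $x_c$ finishes.
\end{itemize}

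The real difference is in how you get operators that are diagonal on the $\hat b_{i,c}$. The paper uses only $\hat R_k^k$, and your remark that $\hat R_k^k$ is ``uniform and useless'' is mistaken. It sends $\hat b_{i,c}\mapsto x_{i_1}\cdots x_{i_{k-1}}x_c\,\hat b_{i,c}$, and for fixed $i$ this eigenvalue still depends on $c$. So the vectors
\[
(\hat R_k^k)^u\hat M_{k-1}\hat b_i=(x_{i_1}\cdots x_{i_{k-1}})^u\sum_{c\in[N]\setminus i}x_c^{u+1}\hat b_{i,c},\qquad 0\le u\le N-k,
\]
already form the Vandermonde system.

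Your route through $\hat S_1\cdots\hat S_{k-1}\hat R_k$ and its conjugates also works. That operator does act as $\hat b_i\mapsto x_{i_1}\hat b_i$. But it costs you an extra check: that \eqref{eq:intertwine-S} survives specialization. This is easy, since $S_s$ only permutes entries and so never produces an entry above $N$.

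The two routes also get $\hat R_k$-stability differently. You specialize \eqref{eq:intertwine-R} directly and use Lemma~\ref{lem:vanish} on the terms containing $N+1$. This is valid, because entries of $R_kb_\sigma$ are at most $\max\sigma+1$ and the coefficients $e_a/e_{a-1}$ have $a\le N$. The paper instead writes
\[
\hat R_kU_k^{(N)}=U_k^{(N)}\bigl(M_{k-1}^{(N)}D_k-D_{k+1}M_k^{(N)}\bigr),
\]
using only the two finite relations the proposition itself asserts, with $M_N^{(N)}=\hat M_N=0$ at the top level. The paper's version is more economical: it needs no further specialization beyond what was already done for \eqref{eq:intertwine-M} and \eqref{eq:intertwine-D}. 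Yours is more direct but requires rechecking the denominators and the $N+1$ terms for two more relations.
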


\begin{proof}
Equations~\eqref{eq:intertwine-M} and \eqref{eq:intertwine-D}, together with
Lemma~\ref{lem:vanish}, give the two displayed equalities. Next, since
$\hat R_k=\hat M_{k-1}\hat D_k-\hat D_{k+1}\hat M_k$, note
\[
\hat R_kU_k^{(N)} = U_k^{(N)}(M_{k-1}^{(N)}D_k-D_{k+1}M_k^{(N)}),
\]
where $M_N^{(N)}=\hat M_N=0$. This means $\hat{R}_k$ preserves the image of $U_k^{(N)}$.

We prove invertibility by showing $U_k^{(N)}$ is surjective by induction on $k$, starting with $U_0^{(N)}=1$. The inductive hypothesis then states that $U_{k-1}^{(N)}$ is surjective, so for every $i\in I_{k-1}$ we can find a $v$ such that $\hat b_i=U_{k-1}^{(N)}v$. Then $\hat M_{k-1}\hat b_i=U_k^{(N)}M_{k-1}^{(N)}v$ lies in the
image of $U_k^{(N)}$, and since applying $\hat{R}_k$ preserves the image, so does
\[
(\hat R_k^{k})^u\hat M_{k-1}\hat b_i
 =(x_{i_1}\cdots x_{i_{k-1}})^u
   \sum_{c\in[N]\setminus i}x_c^{u+1}\hat b_{i,c}
 \qquad(0\le u\le N-k).
\]
For fixed $i$, the coefficient matrix is a Vandermonde matrix in the
$N-k+1$ distinct variables $x_c$, up to nonzero row and column factors.
These vectors therefore span every $\hat b_{i,c}$. This proves
surjectivity, and both spaces have dimension $N!/(N-k)!$, so
$U_k^{(N)}$ is invertible.

\end{proof}

For a marked NUIG $\bG$, let $M_\bG^{(N)}$ be $M_\bG$ with each $M_j$
replaced by $M_j^{(N)}$.

\begin{theorem}\label{thm:pca}
Let $\Gamma=\operatorname{tr}\bG$ be a proper circular arc graph on $n$
vertices, where $\bG=(G,(n+1,\dots,n+k),(1,\dots,k))$ as in
Lemma~\ref{lem:pca}. For any $N\in\bN$,
\[
X_\Gamma(x_1,\dots,x_N)=\operatorname{tr}M_\bG^{(N)},
\]
which is $e$-positive. In particular, $\Gamma$ is $e$-positive.
\end{theorem}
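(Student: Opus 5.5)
Fix $N\in\bN$. By Lemma~\ref{lem:word}, $\hat M_\bG=\hat X_1\cdots\hat X_m$ with each $\hat X_s$ some $\hat M_j$ or $\hat D_j$. I will check that every intermediate index $j$ stays in $[0,N]$ where it matters. If some intermediate marked clique has size exceeding $N$, then $G$ contains a clique of size $>N$, so $X_\Gamma(x_1,\dots,x_N)$ has no proper colorings. (Here I use that the marked cliques of $G$ stay cliques in $\Gamma$; if the $N$ is too small this needs a separate remark.) In that case $\hat V_j=0$ at that stage and both sides vanish, since $M^{(N)}_{N}=0$ on the tableau side as well. Otherwise, every factor acts between spaces $V_j$ with $j\le N$.

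In this range, Proposition~\ref{prop:finite} applies to each factor, giving $U^{(N)}_{j+1}M_j^{(N)}=\hat M_jU^{(N)}_j$ and $\hat D_jU^{(N)}_j=U^{(N)}_{j-1}D_j$. Telescoping through the word yields
\[
U_k^{(N)}M_\bG^{(N)}=\hat M_\bG U_k^{(N)},
\]
where both endpoints have level $k$ because $\alpha$ and $\beta$ both have length $k$. Since $U_k^{(N)}$ is invertible by Proposition~\ref{prop:finite}, this reads $M_\bG^{(N)}=(U_k^{(N)})^{-1}\hat M_\bG U_k^{(N)}$. Trace is similarity invariant on these finite-dimensional spaces over $\bQ(x_1,\dots,x_N)$, so
\[
\operatorname{tr}M_\bG^{(N)}=\operatorname{tr}\hat M_\bG=X_\Gamma(x_1,\dots,x_N).
\]
The last equality is Equation~\eqref{eq:pcatrace}, specialized to $N$ colors via Lemma~\ref{lem:color}. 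Lemma~\ref{lem:color} applies because $\alpha=(n+1,\dots,n+k)$ and $\beta=(1,\dots,k)$ are disjoint cliques; disjointness holds since $n\ge k$, which follows from $\Gamma$ having $n$ vertices distinct from the identified ones. Where this needs care, I will handle small cases directly.

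For $e$-positivity, expand the trace as a sum over closed paths of basis vectors $b_\sigma$. Each entry of $M_j^{(N)}$ is a nonnegative rational: $A(y\mid\sigma)\ge0$ because $\sigma$ has distinct entries, every factor $\frac{\sigma_r-\sigma_{r'}+1}{\sigma_r-\sigma_{r'}}$ is positive or zero, and $\frac{\sigma_r-1}{\sigma_r}\ge0$. Each entry of $D_j$ is a ratio $e_a/e_{a-1}$. Thus $\operatorname{tr}M^{(N)}_\bG$ is a nonnegative rational combination of products of such ratios. The main obstacle is that these are ratios, not polynomials. To handle this, I will follow a path $\sigma^{(0)}\to\cdots\to\sigma^{(m)}=\sigma^{(0)}$ and track the multiset of entries of the current tuple.

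An $M$-step appends a new entry $y$ that is either $1$ or $\sigma_r+1$; in the latter case the old entry $\sigma_r$ remains in the tuple. A $D$-step deletes the first entry $a$ and contributes $e_a/e_{a-1}$. I will show the denominators telescope: pair each $e_{a-1}$ in a denominator with the numerator $e_{a-1}$ produced when the entry $a-1$, which generated $a$ or is present alongside it, is itself deleted. On a closed path, the entries removed equal the entries created, so after cancellation the product becomes $\prod e_{\lambda_i}$ with $e_0=1$. The cleanest bookkeeping assigns to each vertex the column $c(v)$, as in Theorem~\ref{thm:pca-formula}, and shows the denominators cancel row by row. This telescoping is the main obstacle, and I will argue it by a weight-preserving induction along the path.

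Finally, taking $N\ge n$ makes $X_\Gamma(x_1,\dots,x_N)$ determine the $e$-coefficients of $X_\Gamma(\bm x)$, so $\Gamma$ is $e$-positive.
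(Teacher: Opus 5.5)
Your derivation of $\operatorname{tr}M_\bG^{(N)}=X_\Gamma(x_1,\dots,x_N)$ is the paper's argument: push the intertwining relations of Proposition~\ref{prop:finite} through the word of Lemma~\ref{lem:word}, conjugate by the invertible $U_k^{(N)}$, and treat separately the case where some level exceeds $N$. In that degenerate case you do not need the clique discussion about $\Gamma$. The map $\hat M_\bG$ factors through $\hat V_j=0$, so $\operatorname{tr}\hat M_\bG=0$, and Equation~\eqref{eq:pcatrace} holds for every $N$.

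The gap is the $e$-positivity step, which you call the main obstacle but only promise. The pairing you describe does not work as stated. Take $\bG=(P_4,(4),(1))$, so $\Gamma$ is a triangle. The closed path $(2)\to(2,3)\to(3)\to(3,1)\to(1)\to(1,2)\to(2)$ has weight $\frac32\cdot\frac{e_2}{e_1}\cdot\frac23\cdot\frac{e_3}{e_2}\cdot2\cdot e_1=2e_3$. The denominator $e_1$ from deleting the initial entry $2$ is cancelled only by deleting an entry $1$ created later, which neither generated that $2$ nor was ever alongside it. Entries of $j$ have no generator on the path, so a correct matching must wrap around to the generators of the final entries and use the first-in-first-out order.

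More to the point, no telescoping is needed; this is the missing idea. Each closed path contributes a nonnegative rational times a Laurent monomial in $e_1,\dots,e_N$, so $\operatorname{tr}M_\bG^{(N)}$ is a Laurent polynomial in $e_1,\dots,e_N$ with nonnegative coefficients. You have already shown it equals $X_\Gamma(x_1,\dots,x_N)$, a symmetric polynomial and hence a polynomial in $e_1,\dots,e_N$. Since $e_1,\dots,e_N$ are algebraically independent, Laurent monomials in them are linearly independent, so the two expansions agree term by term. Monomials with a negative exponent therefore have coefficient zero, and the $e$-coefficients of $X_\Gamma(x_1,\dots,x_N)$ are the nonnegative Laurent coefficients. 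This is how the paper concludes.

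Your per-path claim is in fact true. If $m_c$ counts the appends of $c$, the path's $e$-factor is $\prod_{c\ge1}e_c^{m_c-m_{c+1}}$. Moreover $m_{c+1}\le m_c$: by the queue order each occurrence of $c$ supports at most one append of $c+1$, and when $c$ occurs in $j$, either the initial or the final occurrence supports none. But this is extra work that the uniqueness argument makes unnecessary.
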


\begin{proof}
If every level in the product defining $M_\bG^{(N)}$ is at most $N$, then
Proposition~\ref{prop:finite} applies to each factor, and with
Lemma~\ref{lem:word}, which holds for every $N$,
\[
\operatorname{tr}M_\bG^{(N)}
=\operatorname{tr}\bigl((U_k^{(N)})^{-1}\hat M_\bG U_k^{(N)}\bigr)
=\operatorname{tr}\hat M_\bG
=X_\Gamma(x_1,\dots,x_N).
\]
Otherwise, the product passes through $V_j$ and $\hat V_j$ for some
$j>N$, which are both zero since $[N]$ has no $j$ distinct colors, so
$\operatorname{tr}M_\bG^{(N)}=0=\operatorname{tr}\hat M_\bG
=X_\Gamma(x_1,\dots,x_N)$.

The entries of $M_j^{(N)}$ and $D_j$ are Laurent polynomials in $e_1,\dots,e_N$ with nonnegative coefficients, so the trace is too. Since $e_1,\dots,e_N$ are algebraically independent and the trace is the polynomial $X_\Gamma(x_1,\dots,x_N)$, it is $e$-positive. Taking $N\ge n$ shows $X_\Gamma(\bm x)$ is $e$-positive.
\end{proof}

\begin{example}\label{ex:pca}
The proper circular arc graph $\Gamma$ in Figure~\ref{fig:unroll} is
$\operatorname{tr}\bG$ for $\bG=(G,(7,8),(1,2))$, where $G$ is the NUIG
on $[8]$ shown there. Building $G$ as in Lemma~\ref{lem:word} gives
$\hat M_\bG=\hat D_3\hat M_2(\hat M_1\hat D_2)^4\hat D_3\hat M_2$, so by
Theorem~\ref{thm:pca} and cyclicity of the trace,
\[
X_\Gamma(x_1,\dots,x_N)
=\operatorname{tr}\bigl((M_1^{(N)}D_2)^4(D_3M_2^{(N)})^2\bigr)
=16e_1e_5+20e_2e_4+12e_3^2+84e_6.
\]
\end{example}

When $k=0$, we do not need Proposition~\ref{prop:finite}, since $U_0=1$.

\begin{theorem}\label{thm:stanstem}
If $G$ is a NUIG, then
\[
X_G(\bm x)=M_{(G,\emptyset,\emptyset)},
\]
which is $e$-positive. This proves the $e$-positivity of unit interval graphs without the modular law characterization of \cite{chrommodularlaw} used in \cite{stanstemproof}, and together with \cite[Theorem 5.1]{stanstemreduction} gives an alternate proof of the Stanley--Stembridge conjecture.
\end{theorem}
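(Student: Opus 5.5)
Here $M_{(G,\emptyset,\emptyset)}$ is a map $V_0\to V_0$, i.e.\ a scalar in $\bQ(e_1,e_2,\dots)$, and the claim is that this scalar equals $X_G(\bm x)$. The plan is to apply Theorem~\ref{thm:main} factor by factor, exactly as was done for the marked NUIG identity $U_bM_\bG=\hat M_\bG U_a$, now with $a=b=0$.

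First, $(G,\emptyset,\emptyset)$ is a marked NUIG: $a=b=0$, and the empty vertex sets are vacuously cliques. Lemma~\ref{lem:word} therefore writes $\hat M_{(G,\emptyset,\emptyset)}=\hat X_1\cdots\hat X_m$, a word in the $\hat M_j$ and $\hat D_j$ that starts at level $0$ and ends at level $0$. Definition~\ref{def:tableau-matrix} gives $M_{(G,\emptyset,\emptyset)}=X_1\cdots X_m$.

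Next, I move $U_0=1$ through the word from the right. Relation~\eqref{eq:intertwine-M} turns $U_{j+1}M_j$ into $\hat M_jU_j$, and relation~\eqref{eq:intertwine-D} turns $\hat D_jU_j$ into $U_{j-1}D_j$. Together they give
\[
U_0X_1\cdots X_m=\hat X_1\cdots\hat X_mU_0 .
\]
Since $U_0=1$, this says $M_{(G,\emptyset,\emptyset)}=\hat M_{(G,\emptyset,\emptyset)}$. All of this takes place with $N=\infty$. No invertibility is needed, which is why Proposition~\ref{prop:finite} is not used.

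Then I identify the color side. By definition, $\hat M_{(G,\emptyset,\emptyset)}$ is the $1\times1$ matrix $\sum_{\kappa}\prod_v x_{\kappa(v)}$, with no input denominators and the single entry $\hat b_\emptyset\hat b_\emptyset^T$. This is exactly $X_G(\bm x)$.

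For $e$-positivity, every entry of $M_j$ is a nonnegative rational, and every entry of $D_j$ is $e_a/e_{a-1}$. The product is therefore a sum of nonnegative multiples of Laurent monomials in the $e_i$. Since it equals the polynomial $X_G$ and the $e_i$ are algebraically independent, each surviving monomial must have nonnegative exponents, so the expansion is $e$-positive.

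The main subtlety is this last step. A single matrix entry need not be a finite sum, since the intermediate spaces are infinite-dimensional when $N=\infty$. I expect to handle this either by noting that $M_jb_\sigma$ is a finite sum (because $T(\sigma)$ is finite), so the scalar is a finite sum of products, or by specializing to $N\ge|V(G)|$ as in the proof of Theorem~\ref{thm:pca}. In the specialized setting, the same argument runs through the levels up to $N$, and the factors vanish beyond that.
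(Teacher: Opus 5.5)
Your proposal is correct and matches the paper's proof. You push $U_0=1$ through the word using \eqref{eq:intertwine-M} and \eqref{eq:intertwine-D} to get $M_{(G,\emptyset,\emptyset)}=\hat M_{(G,\emptyset,\emptyset)}=X_G(\bm x)$, then conclude $e$-positivity because a nonnegative Laurent polynomial in the algebraically independent $e_i$ that is also a polynomial must be $e$-positive. Your first resolution of the finiteness worry is all that is needed and no specialization is required: each $M_jb_\sigma$ is a finite sum because $T(\sigma)$ is finite.
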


\begin{proof}
Since $U_0=1$,
\[
X_G(\bm x)=\hat M_{(G,\emptyset,\emptyset)}U_0=U_0M_{(G,\emptyset,\emptyset)}=M_{(G,\emptyset,\emptyset)}.
\]
As in the proof of Theorem~\ref{thm:pca}, this is a Laurent polynomial in the algebraically independent $e_1,e_2,\dots$ with nonnegative coefficients that is also a polynomial, so it is $e$-positive.
\end{proof}

\begin{example}\label{ex:stanstem}
For the NUIG $G+H$ in Figure~\ref{fig:gluing}, building it as in
Lemma~\ref{lem:word} and applying Theorem~\ref{thm:stanstem} gives
\begin{align*}
X_{G+H}(\bm x)&=D_1D_2D_3M_2D_3M_2M_1D_2M_1D_2D_3M_2M_1M_0\\
&=8e_1e_3^2+88e_3e_4+24e_2e_5+40e_1e_6+56e_7,
\end{align*}
which is $e$-positive since every factor has entries that are Laurent
polynomials in the $e_i$ with nonnegative coefficients.
\end{example}

\section{Tableau matrices of graphs}\label{section:extra}

We now explain $M_k$ and $D_k$ using Hikita's tableaux, which gives a
tableau matrix $M_\bG$ for every marked NUIG. This generalizes
\cite[Section 4]{singlegluing} to several input and output boxes, with a
different normalization. Throughout, $N=\infty$.

We draw tableaux with rows increasing to the right and columns
increasing upward.

\begin{definition}[{\cite[Definitions 1.2 and 1.5]{stanstemproof}}]\label{def:hikita-rule}
For a tableau $T$, let $c_T(v)$ be the column of the box $v$, and for a
sequence of vertices $\alpha$, let
$c_T(\alpha)=(c_T(\alpha_1),\dots,c_T(\alpha_\ell))$. Let
$N(v)$ be the sequence of smaller neighbors of $v$ in increasing order.
\emph{Hikita's rule} at $q=1$ adds $v$ to the top of column $y$ of $T$
with probability $A(y\mid c_T(N(v)))/y$, and we write
$T\leftarrow_y v$ for the result. For a NUIG $G$, adding the remaining
vertices of $G$ in increasing order builds $T'$ from $T$ with probability
\[
\Pr(T'\mid T,G)=\prod_{v\in T'\setminus T}
 \frac{A\bigl(c_{T'}(v)\mid c_{T'}(N(v))\bigr)}{c_{T'}(v)}.
\]
Let
$\Pr(T'\mid G)=\Pr(T'\mid\emptyset,G)$.
\end{definition}

This is Hikita's transition probability $\varphi_y$ at $q=1$ \cite[Definitions 1.2 and 1.5]{stanstemproof}, where his red columns are the columns $\le0$ and those containing a neighbor of $v$: telescoping the factors of $A$ over each run of consecutive entries of $c_T(N(v))$ gives his formula. Since the smaller neighbors of $v$ are at the tops of distinct columns, $v$ can be added on top of column $y$ whenever $A(y\mid c_T(N(v)))\ne0$.

\begin{theorem}[{\cite[Theorem 1.6]{stanstemproof} at $q=1$}]\label{thm:hikita}
For a NUIG $G$,
\[
X_G(\bm x)=\sum_T\Pr(T\mid G)\prod_r\lambda_r!\,e_{\lambda_r},
\]
where $T$ has shape $\lambda$.
\end{theorem}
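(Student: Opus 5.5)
We prove Theorem~\ref{thm:hikita} from Theorem~\ref{thm:stanstem}, which gives $X_G(\bm x)=M_{(G,\emptyset,\emptyset)}$, by expanding the product of tableau operators as a sum over tableaux. Write $M_{(G,\emptyset,\emptyset)}=X_1\cdots X_m$ as in Lemma~\ref{lem:word}. The product is applied to $b_\emptyset$ from the right, so it begins with $M_0$ and ends with $D$'s that empty the marked list. Expanding each $M_j b_\sigma=\sum_y A(y\mid\sigma)b_{\sigma,y}$ then writes the product as a sum over choices of $y$ at every $M$-step.

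\textbf{Step 1: the $M$-steps.} Each term corresponds to an assignment $c\colon[n]\to\bN$, with $c(v)$ the $y$ chosen when $v$ is attached. The index $\sigma$ of the current basis vector is the list of columns of the currently marked vertices, in order of addition. By the construction in Lemma~\ref{lem:word}, when $v$ is attached the marked vertices are exactly $N(v)$ in increasing order. So the $M$-factor for $v$ is $A\bigl(c(v)\mid c(N(v))\bigr)$.

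\textbf{Step 2: the $D$-steps.} $D$ always forgets the oldest marked vertex. Every vertex is eventually forgotten, and it contributes $e_{c(v)}/e_{c(v)-1}$ when this happens. Hence each term equals
\[
\prod_{v=1}^n A\bigl(c(v)\mid c(N(v))\bigr)\frac{e_{c(v)}}{e_{c(v)-1}}.
\]

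\textbf{Step 3: terms are tableaux.} Place $v$ on top of column $c(v)$, processing vertices in increasing order. We must show that every assignment with nonzero weight gives a tableau $T$ with $c_T=c$, and conversely. By the remark after Definition~\ref{def:hikita-rule}, the smaller neighbors of $v$ are at the tops of distinct columns, and $v$ can be legally added on top of column $y$ exactly when $A(y\mid c_T(N(v)))\ne0$. Inducting on $v$, nonzero terms are therefore in bijection with the tableaux $T$ built by Hikita's rule.

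\textbf{Step 4: regroup.} Since $A(y\mid\sigma)=y\cdot(A(y\mid\sigma)/y)$, the $A$-factors give
\[
\Pr(T\mid G)\prod_{v}c_T(v).
\]
Now group the boxes of $T$ by rows. A row of length $\lambda_r$ has boxes in columns $1,\dots,\lambda_r$. So $\prod_v c_T(v)=\prod_r\lambda_r!$. The $e$-factors telescope along each row:
\[
\prod_{j=1}^{\lambda_r}\frac{e_j}{e_{j-1}}=e_{\lambda_r}.
\]
Summing over $T$ yields the formula.

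\textbf{Main obstacle.} The delicate point is Step~3. One must check that the $M$-steps remember only column indices of marked vertices, yet this is enough to guarantee a genuine tableau. In particular, the new box must land in a valid position: column $y$ may not become taller than column $y-1$. I would first try to prove this directly by induction. When $y-1=c(u)$ for a marked neighbor $u$ of $v$, the current top of column $y$ should be an earlier vertex placed below $u$. Failing that, I would fall back on Hikita's identification of his red columns with the support of $A$.
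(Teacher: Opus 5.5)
Your proposal is correct and is essentially the paper's proof. The paper obtains $X_G(\bm x)=M_{(G,\emptyset,\emptyset)}=\mathcal M_{(G,\emptyset,\emptyset)}=\sum_T w(T\mid\emptyset,G)$ from Theorems~\ref{thm:stanstem} and~\ref{thm:tableau}, where the latter (via Lemma~\ref{lem:tableau-mult} and Proposition~\ref{prop:tableau-word}) is your term-by-term expansion of the word, carried out for general marked NUIGs; it then applies Equation~\eqref{eq:weight-pr}, which is your Step~4.

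One small correction to Step~3: your ``exactly when'' is too strong. Only the implication ``$A(y\mid c_T(N(v)))\ne0$ implies $v$ may be placed on column $y$'' is true, since a vertex with no smaller neighbors can legally go on other columns but has $A\ne0$ only at $y=1$. That implication is the only direction you need, and your planned induction proves it. If column $y$ were as tall as column $y-1=c_T(u)$, the box right of $u$ would be a column top $t$ with $u<t<v$. Then $t\in N(v)$ by the NUIG condition, so $A(y\mid c_T(N(v)))=0$.
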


We combine the probabilities and the elementary symmetric functions into
a single weight.

\begin{definition}\label{def:weight}
For a NUIG $G$ and $T,T'$ as in Definition~\ref{def:hikita-rule}, the
\emph{weight} of $T'$ is
\[
w(T'\mid T,G)=\prod_{v\in T'\setminus T}
 A\bigl(c_{T'}(v)\mid c_{T'}(N(v))\bigr)
 \frac{e_{c_{T'}(v)}}{e_{c_{T'}(v)-1}}.
\]
\end{definition}

Adding a box in column $y$ turns a row of length $y-1$ into one of length
$y$, so the factors $ye_y/e_{y-1}$ along each row telescope, and
\begin{equation}\label{eq:weight-pr}
w(T'\mid T,G)=\Pr(T'\mid T,G)\,
 \frac{\prod_r\lambda'_r!\,e_{\lambda'_r}}{\prod_r\lambda_r!\,e_{\lambda_r}},
\end{equation}
where $\lambda$ and $\lambda'$ are the shapes of $T$ and $T'$.

\begin{definition}\label{def:graph-tableau}
Let $\bG=(G,\alpha,\beta)$ be a marked NUIG, with $\beta=(1,\dots,a)$ and
$\alpha=(n-b+1,\dots,n)$. For $j\in I_a$, let $T_j$ be a tableau with
rows of lengths $j_1,\dots,j_a$ and $c_{T_j}(\beta)=j$ (whose other
entries are distinct and nonpositive), where $T_\emptyset=\emptyset$. With
$\varepsilon(\sigma)=\prod_re_{\sigma_r}/e_{\sigma_r-1}$, let
$\mathcal M_\bG\colon V_a\to V_b$ be
\[
\mathcal M_\bG b_j=\sum_{T}w(T\mid T_j,G)\,
 \frac{\varepsilon(j)}{\varepsilon(c_T(\alpha))}\,b_{c_T(\alpha)},
\]
where $T$ ranges over tableaux containing $T_j$ whose other entries are
$a+1,\dots,n$, with $c_T(\alpha)\in I_b$. Since the weight
only depends on the columns of the vertices of $G$, the choice of
nonpositive entries does not matter.
\end{definition}

\begin{lemma}\label{lem:tableau-mult}
If $\bG=(G,\alpha,\beta)$ and $\bH=(H,\alpha',\beta')$ are marked NUIGs
and $\beta,\alpha'$ have the same length, then
$\mathcal M_{\bG+\bH}=\mathcal M_\bG\mathcal M_\bH$.
\end{lemma}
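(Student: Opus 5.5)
Write $\bG=(G,\alpha,\beta)$ on $[n]$ with $\beta=(1,\dots,a)$ and $\alpha=(n-b+1,\dots,n)$. Write $\bH=(H,\alpha',\beta')$ on $[m]$ with $\alpha'=(m-a+1,\dots,m)$ and $\beta'=(1,\dots,c)$. Then $\bG+\bH$ is a NUIG on $[n+m-a]$. Its vertex ordering lists $H$ first; vertex $m-a+r$ of $H$ is identified with $r$ of $G$, and each remaining vertex $v$ of $G$ becomes $v+m-a$. The plan is to expand $\mathcal M_\bG\mathcal M_\bH b_j$ as a double sum over pairs of tableaux $(T_H,T_G)$ and biject these pairs with the tableaux $T$ in the sum defining $\mathcal M_{\bG+\bH}b_j$.

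\textbf{Key fact.} The weight $w$ depends only on the columns of vertices, and on the columns of their smaller neighbors. Indeed, $A(y\mid\sigma)$ depends only on $\sigma$, and the factor $e_y/e_{y-1}$ depends only on $y$. The shape of the rest of the tableau never enters. This is why, in Definition~\ref{def:graph-tableau}, the nonpositive filler entries of $T_j$ do not matter.

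\textbf{Step 1: the double sum.} By definition,
\[
\mathcal M_\bG\mathcal M_\bH b_j=\sum_{T_H}\sum_{T_G}w(T_H\mid T_j,H)\,w(T_G\mid T_{l},G)\,\frac{\varepsilon(j)}{\varepsilon(l)}\frac{\varepsilon(l)}{\varepsilon(c_{T_G}(\alpha))}\,b_{c_{T_G}(\alpha)}.
\]
Here $l=c_{T_H}(\alpha')\in I_a$, and $T_G$ ranges over tableaux containing $T_l$. The factors $\varepsilon(l)$ cancel, leaving the normalization $\varepsilon(j)/\varepsilon(c_T(\alpha))$ that $\mathcal M_{\bG+\bH}$ requires.

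\textbf{Step 2: the bijection.} Given $T_H$, build $T$ by continuing from $T_H$ and adding the vertices of $G$ beyond $a$, relabeled $v\mapsto v+m-a$, into the same columns they occupy in $T_G$. The smaller neighbors in $G+H$ of such a vertex are exactly the images of its smaller neighbors in $G$. This uses that $\alpha'$ and $\beta$ are cliques identified in order, and that duplicate edges are ignored. So the neighbor column sequences agree, and $A$ gives the same factor in $T$ as in $T_G$.

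The main obstacle is \textbf{legality}: is adding $v$ on top of column $y$ in $T$ valid exactly when it is valid in $T_G$? Rows must increase to the right and columns upward, and in $T$ the rows containing the boxes of $\alpha'$ may have lengths and content different from those of $T_l$.

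What matters is only that the row to the left of the new box in column $y$ has length $y-1$. Extending it must also preserve the tableau property. Since all earlier entries are smaller than $v$, both conditions reduce to the column heights: column $y-1$ must be strictly taller than column $y$. So I would check that the column heights of $T$ restricted to columns $\ge1$ differ from those of $T_G$ by a fixed shape, and that in both tableaux every column touched later is one whose top box is a marked vertex, or column $1$, or a column $\sigma_r+1$.

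To handle this I would invoke the remark after Definition~\ref{def:hikita-rule}: $A(y\mid\sigma)\neq0$ already forces $y\in T(\sigma)$, and such a $v$ can then always be added, since its smaller neighbors sit at the tops of distinct columns. Consequently the set of nonzero-weight extensions is determined purely by the column sequence of the currently "active" clique, namely the most recent vertices in the sense of Lemma~\ref{lem:word}. That sequence is identical in $T$ and $T_G$, because both start from the columns $l$ of the active clique $\alpha'=\beta$. An induction over the added vertices then shows that the column sequences of active cliques stay equal throughout.

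\textbf{Step 3: conclusion.} The map $(T_H,T_G)\mapsto T$ is inverted by restricting $T$ to $H$ and reading off $l$. This gives a weight-preserving bijection with the same output columns $c_T(\alpha)=c_{T_G}(\alpha)$. Finally, the condition $c_T(\alpha)\in I_b$ is automatic, because the $b$ outputs form a clique and hence lie at the tops of distinct columns. This proves $\mathcal M_{\bG+\bH}=\mathcal M_\bG\mathcal M_\bH$.
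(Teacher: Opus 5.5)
Your proposal is correct and follows essentially the same argument as the paper. You split each build of $G+H$ from $T_j$ into the $H$-part ending at some $T$ and the $G$-part, and use that each $N(v)$ is unchanged so the weight factors as $w(T\mid T_j,H)\,w(T'\mid T,G)$. You then identify the $G$-part with building $G$ from $T_i$ for $i=c_T(\alpha')$, and group by $i$ so that $\varepsilon(i)$ cancels. Your legality discussion spells out a step the paper only asserts: it is settled by the remark after Definition~\ref{def:hikita-rule}, with the bijection taken over nonzero-weight tableaux. The tentative ``fixed shape'' comparison of column heights is not needed.
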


\begin{proof}
Building $G+H$ from $T_j$ first adds the vertices of $H$, giving some
$T$, and then those of $G$, giving some $T'$. Each $N(v)$ is the same in $G+H$ as in $G$, so
$w(T'\mid T_j,G+H)=w(T\mid T_j,H)\,w(T'\mid T,G)$, and the second factor
is the same as building $G$ from $T_i$, where $i=c_T(\alpha')$. Grouping
by $i$,
\[
\mathcal M_{\bG+\bH}b_j
=\sum_i\Bigl(\sum_{c_T(\alpha')=i}w(T\mid T_j,H)\,
 \frac{\varepsilon(j)}{\varepsilon(i)}\Bigr)\mathcal M_\bG b_i
=\mathcal M_\bG\mathcal M_\bH b_j.
\]
\end{proof}

\begin{proposition}\label{prop:tableau-word}
The marked cliques satisfy
\[
\mathcal M_{(K_{k+1},(1,\dots,k+1),(1,\dots,k))}=M_k,
\qquad
\mathcal M_{(K_k,(2,\dots,k),(1,\dots,k))}=D_k.
\]
\end{proposition}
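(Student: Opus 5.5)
Both identities follow by unwinding Definition~\ref{def:graph-tableau} for a single clique, where the sum over tableaux has at most one new box. I treat the two cases separately.

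\emph{The case of $D_k$.} Take $\bG=(K_k,(2,\dots,k),(1,\dots,k))$. Then $a=k$, and there are no vertices beyond the inputs. So for $j\in I_k$ the only tableau $T\supseteq T_j$ is $T_j$ itself, with weight $w(T_j\mid T_j,G)=1$. The outputs are $\alpha=(2,\dots,k)$, so
\[
c_{T_j}(\alpha)=(j_2,\dots,j_k)\in I_{k-1}.
\]
The normalization is
\[
\frac{\varepsilon(j)}{\varepsilon(j_2,\dots,j_k)}=\frac{e_{j_1}}{e_{j_1-1}}.
\]
This gives $\mathcal M_\bG b_j=\frac{e_{j_1}}{e_{j_1-1}}b_{j_2,\dots,j_k}=D_kb_j$.

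\emph{The case of $M_k$.} Take $\bG=(K_{k+1},(1,\dots,k+1),(1,\dots,k))$ and $j=\sigma\in I_k$.
\begin{itemize}
\item The only new vertex is $v=k+1$, with $N(v)=(1,\dots,k)$ and $c_{T_\sigma}(N(v))=\sigma$.
\item Each resulting tableau is $T=T_\sigma\leftarrow_y v$ for some column $y$. Its weight is
\[
w=A(y\mid\sigma)\,\frac{e_y}{e_{y-1}}.
\]
\item The outputs give $c_T(\alpha)=(\sigma,y)$, so
\[
\frac{\varepsilon(\sigma)}{\varepsilon(\sigma,y)}=\frac{e_{y-1}}{e_y},
\]
which cancels the factor $e_y/e_{y-1}$ in the weight exactly.
\end{itemize}
Hence $\mathcal M_\bG b_\sigma=\sum_y A(y\mid\sigma)\,b_{\sigma,y}$, summed over admissible $y$.

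\emph{Matching the index set of $y$.} I must check that the admissible $y$ are exactly those with $A(y\mid\sigma)\ne 0$, i.e.\ $y\in T(\sigma)$, so that this sum is precisely $M_kb_\sigma$. This is the only real point, and it relies on the shape of $T_\sigma$.
\begin{itemize}
\item In $T_\sigma$, the box $r$ sits at the end of a row of length $\sigma_r$. Since the $\sigma_r$ are distinct and all other entries are nonpositive, $r$ is the top of its column among positive entries.
\item Placing $v$ on top of column $y$ must produce a valid tableau. Rows increase to the right since all entries are smaller than $v$, and columns increase upward.
\item The column $y$ must end a row whose length is $y-1$. A row of length $\sigma_r$ exists for each $r$, and $T_j$ may contain further rows of nonpositive entries, which I may choose freely.
\item So each $y\in T(\sigma)$, namely $y=1$ or $y=\sigma_r+1\notin\sigma$, is placeable, and $A$ is nonzero there. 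For $y\notin T(\sigma)$, $A(y\mid\sigma)=0$ by definition, so those terms contribute nothing.
\item One also needs $(\sigma,y)\in I_{k+1}$, i.e.\ $y\notin\sigma$. This holds because if $y=\sigma_r$, the box $v$ would sit above the top of the row $r$ occupies, but $A(\sigma_r\mid\sigma)=0$ anyway unless $\sigma_r=\sigma_{r'}+1$. In that case $y\notin T(\sigma)$, so again the coefficient vanishes.
\end{itemize}

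I expect this last bookkeeping to be the only obstacle: the remark after Definition~\ref{def:hikita-rule} asserts that $v$ can be added whenever $A\ne0$, and I will invoke it. The substantive content is that the $\varepsilon$ normalization exactly cancels $e_y/e_{y-1}$ for $M_k$ and leaves $e_{j_1}/e_{j_1-1}$ for $D_k$. Both are single-line computations.
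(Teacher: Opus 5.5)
Your proposal is correct and follows the paper's own one-line argument: unfold the definitions of $w$ and $\varepsilon$. For $D_k$ the weight is trivial and $\varepsilon(j)/\varepsilon(c_T(\alpha))$ leaves $e_{j_1}/e_{j_1-1}$, while for $M_k$ that same ratio cancels the factor $e_y/e_{y-1}$ and leaves $A(y\mid\sigma)$.

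Two small bookkeeping remarks. By Definition~\ref{def:graph-tableau}, $T_j$ has exactly the rows of lengths $j_1,\dots,j_a$; only the values of the nonpositive entries are free, not extra rows. Your placeability argument for $y\in T(\sigma)$ never uses extra rows, so nothing breaks. Also, $y\notin\sigma$ is imposed directly by the requirement $c_T(\alpha)\in I_b$ in that definition, so your last bullet can be replaced by citing that condition.
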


\begin{proof}
This follows by unfolding the definitions of $w$ and $\varepsilon$.
\end{proof}

\begin{theorem}\label{thm:tableau}
For every marked NUIG $\bG$, $M_\bG=\mathcal M_\bG$. For $N\in\bN$,
$M_\bG^{(N)}=\mathcal M_\bG^{(N)}$, where $\mathcal M_\bG^{(N)}$ is
$\mathcal M_\bG$ with $j$ and $T$ restricted to at most $N$ columns and
$e_i=e_i(x_1,\dots,x_N)$.
\end{theorem}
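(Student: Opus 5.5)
The plan is to reduce the statement to Lemma~\ref{lem:tableau-mult} and Proposition~\ref{prop:tableau-word}. By Lemma~\ref{lem:word}, a marked NUIG $\bG$ is built vertex by vertex. We start from the marked clique of inputs and alternately apply a deletion step or an attachment step. Concretely, we will write $\bG$ as an iterated sum $\bG=\bG_1+\bG_2+\cdots+\bG_m$ of marked graphs, each equal to $(K_{j+1},(1,\dots,j+1),(1,\dots,j))$ or $(K_j,(2,\dots,j),(1,\dots,j))$, listed in the same order as the factors $\hat X_1\cdots\hat X_m$.

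This identity of marked graphs is exactly what the proof of Lemma~\ref{lem:word} establishes. The color matrix product there was obtained from Lemma~\ref{lem:color} applied to such a decomposition. Duplicate edges arising from gluing cliques along common vertices do not affect anything. In particular, $N(v)$, the set of smaller neighbors, is unchanged. So each step attaches the new vertex exactly to the $r$ most recently marked vertices.

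One point needs care. Lemma~\ref{lem:tableau-mult} is stated for marked NUIGs, and intermediate sums must again be marked NUIGs with the right vertex numbering. Each partial sum $\bG_s+\cdots+\bG_m$ is the subgraph of $G$ on an initial segment of vertices. Its outputs are the currently marked clique, which is a final segment. So it is a marked NUIG after relabeling, and its $N(v)$ sequences agree with those in $G$.

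With this in place, we apply Lemma~\ref{lem:tableau-mult} repeatedly to get $\mathcal M_\bG=\mathcal M_{\bG_1}\cdots\mathcal M_{\bG_m}$. Proposition~\ref{prop:tableau-word} then identifies each factor with the corresponding $M_j$ or $D_j$. This yields $\mathcal M_\bG=X_1\cdots X_m=M_\bG$ by Definition~\ref{def:tableau-matrix}. I expect the only mild subtlety to be the bookkeeping in this multiplication step: checking that building $G+H$ from $T_j$ really factors as described, which the proof of Lemma~\ref{lem:tableau-mult} already handles.

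For the finite-$N$ statement, we rerun the same argument with everything restricted to at most $N$ columns and with $e_i=e_i(x_1,\dots,x_N)$. The proof of Lemma~\ref{lem:tableau-mult} holds verbatim under this restriction. Column counts only grow as vertices are added, so a tableau with at most $N$ columns at the end has at most $N$ columns at every intermediate stage. Hence grouping by the intermediate $i=c_T(\alpha')$ ranges over $i$ with entries in $[N]$.

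Under the same restriction, Proposition~\ref{prop:tableau-word} gives $M_k^{(N)}$: adding a box in column $N+1$ is exactly the omitted term $b_{\sigma,N+1}$. It also gives $D_k$ with finite $e_i$. This proves $M_\bG^{(N)}=\mathcal M_\bG^{(N)}$.
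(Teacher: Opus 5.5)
Your proposal is correct and follows the paper's proof: you write $\bG$ as a sum of the marked cliques of Proposition~\ref{prop:tableau-word} as in Lemma~\ref{lem:word}, apply Lemma~\ref{lem:tableau-mult} repeatedly, and rerun the argument with at most $N$ columns for the finite case. Your extra checks fill in details the paper leaves implicit: each partial sum is a marked NUIG on an initial segment, column counts only grow so the restriction is compatible with the grouping, and the column-$(N{+}1)$ term is exactly the one omitted in $M_k^{(N)}$.
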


\begin{proof}
As in the proof of Lemma~\ref{lem:word}, $\bG$ is a sum of the marked
cliques in Proposition~\ref{prop:tableau-word}, so this follows from
Lemma~\ref{lem:tableau-mult}. The finite case is proved the same way.
\end{proof}

Taking $a=b=0$, Theorems~\ref{thm:stanstem} and~\ref{thm:tableau} give
$X_G(\bm x)=\mathcal M_{(G,\emptyset,\emptyset)}=\sum_Tw(T\mid\emptyset,G)$,
which by Equation~\eqref{eq:weight-pr} gives a new proof of
Theorem~\ref{thm:hikita}.

On the diagonal of $M_\bG^{(N)}$, the factors $\varepsilon$ cancel. This
gives an explicit formula for proper circular arc graphs: unroll the
graph as in Lemma~\ref{lem:pca}, place the inputs in columns $j$, build
the rest, and keep the tableaux whose outputs return to the columns $j$.

\begin{restated}{thm:pca-formula}
Let $\Gamma=\operatorname{tr}\bG$ be a proper circular arc graph on $n$
vertices with $\bG=(G,(n+1,\dots,n+k),(1,\dots,k))$ as in
Lemma~\ref{lem:pca}, and let $N\ge n$. Then
\[
X_\Gamma(\bm x)=\sum_j\sum_{\substack{T\supseteq T_j\\
 T\setminus T_j=\{k+1,\dots,n+k\}\\ c_T(n+1,\dots,n+k)=j}}\
 \prod_{v=k+1}^{n+k}
 A\bigl(c_T(v)\mid c_T(N(v))\bigr)\frac{e_{c_T(v)}}{e_{c_T(v)-1}},
\]
where $j$ ranges over $I_k$ with entries at most $N$, and $T$ ranges
over tableaux with at most $N$ columns.
\end{restated}

\begin{proof}
By Theorems~\ref{thm:pca} and~\ref{thm:tableau}, $X_\Gamma(x_1,\dots,x_N)$
is the sum of the diagonal entries $w(T\mid T_j,G)$ with
$c_T(\alpha)=j$, which is the right side. Since $N\ge n$, both sides have
the same expansion in the algebraically independent $e_1,\dots,e_N$, so
the identity holds in infinitely many variables.
\end{proof}

By Equation~\eqref{eq:weight-pr}, each summand in
Theorem~\ref{thm:pca-formula} equals
\[
\Pr(T\mid T_j,G)\,
 \frac{\prod_r\lambda_r!\,e_{\lambda_r}}{\prod_rj_r!\,e_{j_r}},
\]
where $\lambda$ is the shape of $T$, so Theorem~\ref{thm:pca-formula} is
Theorem~\ref{thm:hikita} with the first and last $k$ vertices tied
together.

\begin{example}\label{ex:pca-formula}
Let $\Gamma=P_2$, so $k=1$ and $\bG=(P_3,(3),(1))$ as in Example~\ref{ex:p3}. With $N=3$, Figure~\ref{fig:tableau-example} shows every tableau built from $T_1,T_2,T_3$ with at most $3$ columns, with the factor of each added vertex on its arrow. Each final tableau $T$ is labeled by $w(T\mid T_j,G)\,\varepsilon(j)/\varepsilon(c_T(3))\,b_{c_T(3)}$, so summing the labels for each $j$ gives column $j$ of $\mathcal M_\bG^{(3)}=(D_2M_1)^2$. Its diagonal comes from the two tableaux where vertex $3$ returns to column $j$, so $X_{P_2}(\bm x)=2e_2$.
\end{example}

\begin{figure}[ht]
\centering
\begin{tikzpicture}[
  every node/.style={font=\small},
  build/.style={->},
  wt/.style={font=\footnotesize,inner sep=2pt}
]
  \def\s{.55}
  \newcommand{\T}[3]{%
    \foreach \c/\r/\e/\f in {#3}{%
      \ifnum\f=1 \def\bxfill{gray!25}\else\def\bxfill{white}\fi
      \draw[fill=\bxfill] ({#1+(\c-1)*\s},{#2-\s/2+(\r-1)*\s})
        rectangle ++(\s,\s);
      \node[font=\footnotesize] at ({#1+(\c-.5)*\s},{#2+(\r-1)*\s}) {$\e$};
    }}
  \def\ha(#1,#2)(#3){\draw[build] ({#1+1.8},#2) -- ({#1+2.9},#3)}
  \T{1.1}{0}{1/1/1/1}
  \T{3}{0}{1/1/1/1,2/1/2/1}
  \T{6}{.7}{1/1/1/1,2/1/2/1,1/2/3/1}
  \T{6}{-.7}{1/1/1/1,2/1/2/1,3/1/3/1}
  \ha(0,0)(0) node[midway,above,wt] {$\frac{2e_2}{e_1}$};
  \ha(3,0)(.7) node[midway,above,wt] {$\frac{e_1}{2}$};
  \ha(3,0)(-.7) node[midway,below,wt] {$\frac{3e_3}{2e_2}$};
  \node[anchor=west] at (7.8,.7) {$\bm{e_2\,b_1}$};
  \node[anchor=west] at (7.8,-.7) {$3e_2\,b_3$};
  \T{.55}{-3}{1/1/0/0,2/1/1/1}
  \T{3}{-2.3}{1/1/0/0,2/1/1/1,1/2/2/1}
  \T{3}{-3.7}{1/1/0/0,2/1/1/1,3/1/2/1}
  \T{6}{-2.3}{1/1/0/0,2/1/1/1,1/2/2/1,2/2/3/1}
  \T{6}{-3.7}{1/1/0/0,2/1/1/1,3/1/2/1,1/2/3/1}
  \ha(0,-3)(-2.3) node[midway,above,wt] {$\frac{e_1}{2}$};
  \ha(0,-3)(-3.7) node[midway,below,wt] {$\frac{3e_3}{2e_2}$};
  \ha(3,-2.3)(-2.3) node[midway,above,wt] {$\frac{2e_2}{e_1}$};
  \ha(3,-3.7)(-3.7) node[midway,above,wt] {$\frac{2e_1}{3}$};
  \node[anchor=west] at (7.8,-2.3) {$\bm{e_2\,b_2}$};
  \node[anchor=west] at (7.8,-3.7) {$\frac{e_3}{e_1}\,b_1$};
  \T{0}{-5.6}{1/1/-1/0,2/1/0/0,3/1/1/1}
  \T{3}{-5.6}{1/1/-1/0,2/1/0/0,3/1/1/1,1/2/2/1}
  \T{6}{-5.6}{1/1/-1/0,2/1/0/0,3/1/1/1,1/2/2/1,2/2/3/1}
  \ha(0,-5.6)(-5.6) node[midway,above,wt] {$\frac{2e_1}{3}$};
  \ha(3,-5.6)(-5.6) node[midway,above,wt] {$\frac{2e_2}{e_1}$};
  \node[anchor=west] at (7.8,-5.6) {$\frac{4e_1e_3}{3e_2}\,b_2$};
  \node at (1.375,-.8) {$T_1$};
  \node at (1.1,-3.8) {$T_2$};
  \node at (.825,-6.4) {$T_3$};
  \node[anchor=west] at (9.9,-2.8) {$\displaystyle
    \mathcal M_\bG^{(3)}=\begin{pmatrix}
      \bm{e_2}&\frac{e_3}{e_1}&0\\[2pt]
      0&\bm{e_2}&\frac{4e_1e_3}{3e_2}\\[2pt]
      3e_2&0&\bm{0}
    \end{pmatrix}$};
\end{tikzpicture}
\caption{Every build from $T_1,T_2,T_3$ with at most $3$ columns, giving
the columns of $\mathcal M_\bG^{(3)}=(D_2M_1)^2$ for
$\bG=(P_3,(3),(1))$.}
\label{fig:tableau-example}
\end{figure}

\section{Further directions}\label{section:conclusions}

In this paper, we proved the $e$-positivity of proper circular arc graphs and provided an alternate proof of the $e$-positivity of unit interval graphs by proving that color matrices and tableau matrices are related by a change of basis.

Our proof constructs $U_k$ recursively but does not give an explicit formula. In particular, the matrix can be computationally calculated but the formula for each entry becomes very complicated at $k\geq 3$. Finding an explicit formula for $U_k$ and its inverse in finitely many variables would provide a more direct way to calculate tableau matrices and color matrices from one another.

In particular, for any non-NUIG marked graph $\bG$ with clique inputs and outputs, Proposition~\ref{prop:finite} still defines a matrix $M_{\bG}=(U_b^{(N)})^{-1}\hat M_\bG U_a^{(N)}$, providing a linear extension of tableau to other graphs. Note $M_{\bG}$ need not be built from $M_k$ and $D_k$, so its entries may not be $e$-positive. Can these entries be described by a rule on tableaux when the graph is not a NUIG? For example, a rule for attaching a cycle could lead to further $e$-positivity results for graphs glued along cliques, extending the
single-vertex constructions in \cite{singlegluing}.

Another direction is to seek a $q$-refinement of these tableau matrices, using the same basis to describe the effect of ascent weights. A corresponding trace formula would extend the $q$-trace question in \cite[Conjecture 74]{singlegluing} to gluing along larger cliques.

Similarly, one can extend the tableau formulas to the
Tutte symmetric function in this basis, extending the matrix
construction in \cite{tuttematrix}. Since Tutte weights allow adjacent
vertices to have the same color, such an extension would also need to
account for repeated colors on the marked vertices. This could perhaps extend Hikita's tableau method to provide a simpler formula for the Tutte symmetric function of NUIGs.

\section{Acknowledgments}
During the preparation of this manuscript, the author used Claude Opus 5.0, Opus 5.5, and GPT-6-Sol for exploratory discussion, computationally checking conjectures, and improving prose. All arguments presented above were independently verified by the author who assumes full responsibility for the mathematics in the paper.

\end{document}